\theoremstyle{plain}
\newtheorem{theorem}{Theorem}
\newtheorem{lemma}[theorem]{Lemma}                              
\newtheorem{proposition}[theorem]{Proposition}
\newtheorem{corollary}[theorem]{Corollary}
\theoremstyle{definition}
\newtheorem{definition}[theorem]{Definition}
\newtheorem{example}[theorem]{Example}
\newtheorem{remark}[theorem]{Remark}
\newtheorem{assumption}[theorem]{Assumption}
\def \g {{\gamma}}
\def \a {{\alpha}}
\def \b {{\beta}}
\def \xbar {\bar{x}}
\def \ggg {{\nu}}
\newcommand{\<}{\langle}
\renewcommand{\>}{\rangle}
\renewcommand{\(}{\left(}
\renewcommand{\)}{\right)}
\renewcommand{\[}{\left[}
\renewcommand{\]}{\right]}
\newcommand\N{\mathbb{N}}
\newcommand\Eb{\mathbb{E}}
\newcommand\Qb{\mathbb{Q}}
\newcommand\Rb{\mathbb{R}}
\newcommand\Ib{\mathbb{I}}
\newcommand\R{\mathbb{R}}
\newcommand\G{\Gamma}
\newcommand\p{\partial}
\newcommand\Ac{\mathscr{A}}
\newcommand\Bc{\mathscr{B}}
\newcommand\Ec{\mathscr{E}}
\newcommand\Fc{\mathscr{F}}
\newcommand\Gc{\mathscr{G}}
\newcommand\Lc{\mathscr{L}}
\newcommand\Mc{\mathscr{M}}
\newcommand\Vc{\mathscr{V}}
\newcommand\eps{\varepsilon}
\newcommand\Om{\Omega}
\newcommand\sig{\sigma}
\newcommand\gam{\gamma}
\newcommand\Gam{\Gamma}
\newcommand\lam{\lambda}
\newcommand\del{\delta}
\newcommand\xb{\bar{x}}
\newcommand\Hv{\mathbf{H}}
\newcommand\Cv{\mathbf{C}}
\newcommand\mv{\mathbf{m}}
\def \p {{\partial}}
\renewcommand\d{\partial}
\newcommand\ii{\mathtt{i}}
\newcommand\dd{\mathrm{d}}
\newcommand\ee{\mathrm{e}}
\def \phi {{\varphi}}
\def \eps {{\varepsilon}}
\begin{document}

\title{Analytical expansions for parabolic equations}

\author{
Matthew Lorig
\thanks{Department of Applied Mathematics, University of Washington, Seattle, USA.
\textbf{e-mail}: mattlorig@gmail.com.}
\and
Stefano Pagliarani
\thanks{CMAP, Ecole Polytechnique Route de Saclay, 91128 Palaiseau Cedex, France.
\textbf{e-mail}: stepagliara1@gmail.com. Work partially supported by the Chair {\it Financial Risks} of the {\it Risk Foundation}.}
\and
Andrea Pascucci
\thanks{Dipartimento di Matematica, Universit\`a di Bologna, Bologna, Italy.
\textbf{e-mail}: andrea.pascucci@unibo.it}
}

\date{This version: \today}

\maketitle

\begin{abstract}
We consider the Cauchy problem associated with a general parabolic partial differential equation
in $d$ dimensions.  We find a family of closed-form asymptotic approximations for the unique
classical solution of this equation as well as rigorous short-time error estimates.  Using a
boot-strapping technique, we also provide convergence results for arbitrarily large time
intervals.
\end{abstract}

\noindent \textbf{Keywords}:  {parabolic PDE, asymptotic expansion, singular perturbation,
analytical approximation}

%
%

\section{Introduction}
Asymptotic analysis and perturbation theory have a long history in a variety of fields including
quantum mechanics \cite{sakurai}, classical mechanics \cite{goldstein}, fluid mechanics
\cite{VanDyke1975, Lagerstrom1988, KevorkianCole1996} and mathematical biology \cite{Murray2002}.
More recently, some of techniques from perturbation theory and heat kernel expansions have been
applied to problems arising in mathematical finance: see, for instance, \cite{hagan-woodward,
laborderebook, BenhamouGobetMiri2010, ChengCostanzinoLiechtyMazzucatoNistor2011, fpss}. The
authors of the present manuscript have also made recent contributions in mathematical finance with
a focus on finding closed-form pricing approximations for models both without jumps
\cite{pascucci-parametrix, pascucci} and with jumps \cite{lorig-pagliarani-pascucci-1,
lorig-jacquier-1}, as well as finding closed-form approximations for implied volatility
\cite{lorig-pagliarani-pascucci-2, lorig-pagliarani-pascucci-3, lorig-3}.
\par
In this paper, we shall consider the following Cauchy problem
\begin{equation}
 \begin{cases}
 \left(\d_t + \Ac\right)u(t,x)
    =  0,\qquad  &t\in    [0,T[,\ x\in\mathbb{R}^d, \\
 u(T,x)=  \phi(x), & x \in  \mathbb{R}^d,
\end{cases}\label{eqpde}
\end{equation}
where $\Ac$ is the second order elliptic differential operator with variable coefficients
\begin{align}\label{operator_AA}
 \Ac &= \sum_{i,j=1}^{d}a_{ij}(t,x)\p_{x_{i}x_{j}}+\sum_{i=1}^{d}a_{i}(t,x)\p_{x_{i}}+a(t,x),\qquad t\in\R,\ x\in
\mathbb{R}^d .
\end{align}
Cauchy problems of the form \eqref{eqpde} arise whenever expectations of solutions of stochastic
differential equations are considered.  This is the case, for example, in option pricing.  Cauchy
problems of the form \eqref{eqpde} also arise in quantum mechanics.  However, in this case, one
typically considers initial rather than final data (i.e., $u(0,x) = \varphi(x)$) as well as
imaginary time: $\d_t \to \ii \d_t$.  Indeed, many of the techniques used for finding
approximation solutions of \eqref{eqpde} have been developed by mathematical physicists.
\par
In analyzing \eqref{eqpde}, rather than seek a general solution $u$, one typically seeks the
\emph{fundamental solution} $\Gamma(t,x;T,y)$ (also referred to as the \emph{Green's function}),
which is obtained by setting the final datum equal to a Dirac delta function $\varphi = \del_y$,
and from which the general solution $u$ can be obtained via integration.
\par
Unfortunately, for general $x$-dependent coefficients ($a_{ij}$, $a_i$, $a$), the fundamental
solution is not available in closed-form.  As such, one instead seeks an approximation of the
fundamental solution.  Typically, this is achieved by expressing the operator $\Ac$ as $\Ac =
\Ac_0 + \Bc_1$, where the fundamental solution $\Gam_0$ corresponding to $\Ac_0$ is known in
closed-form and where $\Bc_1 = \Ac - \Ac_0$.  Formally, then, one obtains the fundamental solution
$\Gam$ corresponding to $\Ac$ through a Dyson (also known as Volterra) series expansion
\cite{avramidi2007analytic, berline1992heat}.
\par
While it is a useful tool, the Dyson series has some notable draw-backs.  First, to compute the Dyson series, one must evaluate operator-valued functions of the form
\begin{align}
\Vc(t_0,t_1)
    &:= \exp \( \int_{t_0}^{t_1} \Ac_0(s) \dd s \) \Bc_1(t_1)
            \exp \( \int_{t_1}^{t_0} \Ac_0(s) \dd s \) ,
\end{align}
where we have explicitly indicated the time-dependence in the operators $\Ac_0$ and $\Bc_1$.  It
is rare that the operator $\Vc(t_0,t_1)$ can be computed explicitly and it is certainly not
explicitly computable in the general case.  Second, the Dyson series is typically asymptotically
divergent.  Hence, even if the first few terms of a Dyson series expansion can be computed
explicitly, one is still left to wonder how accurate the truncated series is.
\par
In this paper, rather than expand the operator $\Ac$ as $\Ac = \Ac_0 + \Bc_1$, we expand it as an
infinite sum: $\Ac = \sum_{n \geq 0} \Ac_n$. {The basic ideas of the expansion technique
were introduced in \cite{pagliarani2011analytical}, where $\Ac$ is a differential operator
corresponding to the generator of a scalar diffusion.  These ideas were later extended in
\cite{pascucci} and \cite{lorig-pagliarani-pascucci-1} to the case where $\Ac$ may be an
integro-differential operator corresponding to the generator of a scalar L\'evy-type process.
Both papers mentioned above establish rigorous short-time error bounds for the approximate
fundamental solution of $(\d_t + \Ac)$.  However, the results of these papers are limited to
one-dimension, and leave unanswered some important practical and theoretical issues.  For example:
(i) Is there an explicit (and fully implementable) representation for the approximate solution at
any given order $N$?  (ii) Can the smoothness of the terminal data $\varphi$ be used to establish
a higher order of accuracy of the asymptotic approximation?  (iii) Can anything be said about the
large-time accuracy of the approximation?  We address all of these questions in this manuscript.
In particular, in a multi-dimensional framework we accomplish the following tasks:}
\begin{enumerate}
\item First, we derive fully explicit approximations at any order for fundamental solution $\Gamma(t,x;T,y)$.
We emphasize that, for every $n$, our $n$-th order approximation of the fundamental solution
$\Gam$ is explicit; no integrals or special functions are required.  This is not the case for the
formal Dyson series expansion.
\item {Second, we show how regularity of the terminal datum $\varphi$ can be used to establish a higher order of accuracy for small times.}
\item Third, we prove convergence results on arbitrarily large time intervals.
\end{enumerate}

{On an applied level, the results proved in this manuscript serve as the foundation for some
recent developments in mathematical finance.  More specifically, in
\cite{lorig-pagliarani-pascucci-2}, the authors use the small-time error bounds established here
for solutions $u$ of \eqref{eqpde} in order to prove small-time error bounds for the \emph{implied
volatility} of European Call options in a general \emph{multifactor} local-stochastic volatility
model.  We note that proving the accuracy result for implied volatility depends on exploiting the
smoothness of the terminal datum $\varphi$.}


Our proofs in this manuscript are based on a combination of symmetry properties of Gaussian kernels and (very
general) classical results such as Duhamel's principle, the Chapman-Kolmogorov identity and some
upper bounds for the fundamental solution of the operator $(\d_{t} + \Ac)$. Due to the generality
of the main ingredients in the proofs, our approach opens the door to more general expansions,
which may not necessarily be based on Gaussian kernels.
\par
The analytical techniques presented in this paper were originally developed with applications to
financial mathematics in mind.  However, because we provide a systematic treatment of Cauchy
problem \eqref{eqpde}, including complete and rigorous proofs of error bounds and convergence, we
believe that our results are of interest in other fields in which parabolic equations arise, such
as mathematical biology, chemistry, physics, engineering and economics.
\par
The rest of this paper proceeds as follows: in Section \ref{sec:approximating} we introduce the
idea of expanding the coefficients of $\Ac$ as a sum of polynomial basis functions.  We provide
examples of useful basis functions and list our main assumptions.  Next, in Section
\ref{sec:approximating1}, we present our main results.  Theorem \ref{th:un_general_repres}
provides a closed-form expression for the $n$-th term of the asymptotic expansion of $u$, the
solution of \eqref{eqpde}.  The {theorem} is written in a very general fashion, which allows for
not just a single asymptotic expansion of $u$, but for {\it an entire family of asymptotic
expansions} for $u$.  In Theorem \ref{th:error_estimates_taylor}, we provide small-time error
bounds for our asymptotic approximation of $u$.  And in Theorem
\ref{th:bootstrapping_convergence}, we provide convergence results, which are valid on any finite
time interval.  Next, in Section \ref{sec:model}, we illustrate how the solution to Cauchy problem
\eqref{eqpde} relates to the pricing of derivatives in financial mathematics.  Finally, Sections
\ref{appendix:proof}, \ref{smalla} and \ref{boota} contain the proofs of Theorems
\ref{th:un_general_repres}, \ref{th:error_estimates_taylor} and \ref{th:bootstrapping_convergence}
respectively.

%
%

\section{General expansion basis}\label{sec:approximating}\setcounter{equation}{0}
To begin, we will establish some notation and state our main assumptions. For any $n\in\N_{0}$, we
denote by $C_{b}^{n,1}(\R^{d})$ the class of bounded functions with (globally) Lipschitz
continuous derivatives of order less than or equal to $n$, and by $\left\|f\right\|_{C_{b}^{n,1}}$
the sum of the $L^{\infty}$-norms of the derivatives of $f$ up to order $n$ and the Lipschitz
constants of the derivatives of order $n$ of $f$.
We also denote by $C_{b}^{-1,1}=L^{\infty}$
the class of bounded and measurable functions and set $\left\|\cdot\right\|_{C_{b}^{-1,1}}=
\left\|\cdot\right\|_{L^{\infty}}$.
\par
Throughout the rest of the paper we shall assume that $\overline{T}>0$ and $N\in\N_{0}$ are fixed
and the coefficients of the operator $\Ac$ in \eqref{operator_AA} satisfy the following
assumption.
\begin{assumption}\label{assumption:parabol_holder_bonded}
There exists a positive constant $M$ such that:
\begin{enumerate}
\item[i)] {\it Uniform ellipticity:}
\begin{align}\label{cond:parabolicity}
 M^{-1}|\xi|^2\leq \sum_{i,j=1}^{d}a_{ij}(t,x)\xi_{i}\xi_{j}\leq M |\xi|^2,\qquad
t\in\left[0,\overline{T}\right],\ x,\xi\in\mathbb{R}^d.
\end{align}
\item[ii)] {\it Regularity and boundedness:} \  the coefficients $a_{ij},a_{i},a\in C\left(\left[0,\overline{T}\right]\times\R^{d}\right)$ and for any
$t\in\left[0,\overline{T}\right]$ we have $a_{ij}(t,\cdot),a_{i}(t,\cdot),a(t,\cdot)\in
C^{N,1}_{b}(\R^{d})$, with their $\left\|\cdot\right\|_{C^{N,1}_{b}}$-norms bounded by $M$.
\end{enumerate}
\end{assumption}
Under Assumption \ref{assumption:parabol_holder_bonded} it is well known that, for any
$T\in\left]0,\overline{T}\right]$ and $\phi \in C^{-1,1}_{b}$, the backward parabolic Cauchy
problem \eqref{eqpde} admits a classical solution $u$.  However, in general, the function $u$ is
not known in closed-form and, for practical purposes, must be computed numerically.
\par
In what follows, it will be convenient to rewrite the differential operator \eqref{operator_AA} in the more compact form
\begin{align}\label{operator_A}
 \Ac:= \sum_{|\alpha |\leq 2} a_{\alpha}(t,x) D^{\alpha}_{x},\qquad t\in\R,\ x\in
\mathbb{R}^d,
\end{align}
where by standard notations
\begin{align}
 \alpha=(\alpha_1,\cdots,\alpha_d)\in \mathbb{N}^{d}_{0}, \quad
 |\alpha|=\sum_{i=1}^{d}\alpha_i,\quad D_{x}^{\alpha}=D^{\alpha}=\partial^{\alpha_1}_{x_1}\cdots
\partial^{\alpha_d}_{x_d}.
\end{align}
Below, we will introduce a family of expansion schemes for the operator $\Ac$. Each of the
different families of expansion schemes is based on a different expansion of the coefficients
$(a_\alpha)_{|\alpha|\leq 2}$, and will result in a different approximation for the solution $u$
of \eqref{eqpde} as well as a different approximation for the fundamental solution $\Gam$. Thus,
for any $\alpha\in\mathbb{N}^d_{0}$ with $|\alpha| \le2$, we fix an approximation sequence
$\left(a_{\alpha,n}\right)_{n\ge 0}$ of {continuous} functions
\begin{align}
 a_{\alpha,n}:\left[0,\overline{T}\right]\times\mathbb{R}^d\to\mathbb{R}.
\end{align}
More precisely we introduce the following definition:
\begin{definition}
We say that $\left(a_{\alpha,n}\right)_{0\leq n\leq N}$ is an {\it $N$-th order polynomial expansion} 
if
, for any $t\in\left[0,\overline{T}\right]$, the functions $a_{\alpha,n}(t,\cdot)$ are polynomials with
$a_{\alpha,0}(t,\cdot)=a_{\alpha,0}(t)$.
\end{definition}
The idea behind our approximation method is to choose a polynomial expansion such that the
sequences of partial sums $\sum_{n=0}^N a_{\alpha,n}(t)$ approximate the coefficients
$a_{\alpha}(t,z)$, either pointwise or in some norm. We conclude this section by presenting some practical examples of polynomial expansions.

\begin{example}\label{example:Taylor}(Taylor polynomial expansion)\\
Let Assumption \ref{assumption:parabol_holder_bonded} ii) hold true. 
Then, for any fixed
$\bar{x}\in\R^{d}$, we define $a_{\alpha,n}$ as the $n$-th order term of the Taylor
expansion of $a_{\alpha}$ in the spatial variables around $\xbar$.  That is, we set
\begin{equation}
 a_{\alpha,n}(\cdot,x)
    =\sum_{|\b|=n}\frac{D^{\b}a_{\alpha}(\cdot,\bar{x})}{\b!}(x-\bar{x})^{\b}, \qquad
        { 0\leq n \leq N},\qquad
|\a|
        \le 2,
\end{equation}
where as usual {$\b!=\b_{1}!\cdots\b_{d}!$ and $x^\beta = x_1^{\beta_1} \cdots x_d^{\beta_d}$}.
The expansion proposed in \cite{lorig-pagliarani-pascucci-2} and \cite{lorig-pagliarani-pascucci-3} is the particular case where $d=2$.
\end{example}

\begin{example}\label{example:enhanced_Taylor}(Enhanced Taylor expansion)\\
In the previous example, the $n$-th order term $\Ac_n$ of the polynomial expansion of $\Ac$
coincides with the $n$-order term of the Taylor expansion. More generally, we may define the
$n$-th order term $\Ac_n$ of the polynomial expansion of $\Ac$ so that it coincides with a higher
order Taylor expansion.  Specifically, { assume $N\geq 1$}, and let {$M_{0}=0$ and $(M_n)_{1\leq n\leq N}$ be a non-decreasing
sequence of natural numbers where, in general, $M_n$ may be greater than $n$}.  We may assume that
\begin{equation}
a_{\alpha,0}(\cdot)
    =  a_{\alpha}(\cdot,\bar{x}), \qquad
a_{\alpha,n}(\cdot,x)
  =  \sum_{|\b|=1+M_{n-1}}^{M_n}
                \frac{D^{\b}a_{\alpha}(\cdot,\bar{x})}{\b!}(x-\bar{x})^{\b}, \qquad
{ 1\leq n
    \leq N}, \qquad
|\a|
    \le 2.
\end{equation}
The enhanced Taylor expansion is motivated by the fact that, in the limit as $M_1 \to \infty$ we have that $\Ac_1 = \Ac - \Ac_0 = \Bc_1$.
Thus, in this limit our expansion for $u$ (given in Theorem \ref{th:un_general_repres}) provides an explicit asymptotic representation for the Dyson series expansion.
\end{example}

\begin{example}\label{example:TimeTaylor}(Time-dependent Taylor polynomial expansion)\\
For any fixed $\bar{x}:\Rb_+ \to \R^{d}$, we define $a_{\alpha,n}$ as the $n$-th order term of the
Taylor expansion of $a_{\alpha}$ in the spatial variables around $\xbar$.  That is, we set
\begin{equation}
 a_{\alpha,n}(\cdot,x)
    =\sum_{|\b|=n}\frac{D^{\b}a_{\alpha}(\cdot,\bar{x}(\cdot))}{\b!}(x-\bar{x}(\cdot))^{\b},\qquad { 0\leq n\leq N},\qquad |\a|\le 2 .
\end{equation}
This expansion for the coefficients allows the expansion point $\bar{x}$ of the Taylor series
to evolve in time.  By construction $\Ac_0$ is guaranteed to be the generator of a diffusion
$X^0$.  It is natural, then, to choose $\xb(t)$ to be $\xb(t) =  E\left[X_t^0\right]$, the
expected value of $X_t^0$.
 In \cite{lorig-pagliarani-pascucci-2} this choice results in a highly accurate approximation for
option prices and implied volatility in the \cite{heston1993} model.
\end{example}

\begin{example}\label{example:Hilbert}(Hermite polynomial expansion)\\
Hermite expansions can be useful when the diffusion coefficients are not smooth.  A remarkable
example in financial mathematics is given by the Dupire's local volatility formula for models with
jumps (see \cite{frizyor2013}). In some cases, e.g., the well-known Variance-Gamma model, the
fundamental solution (i.e., the transition density of the underlying stochastic model) has
singularities.  In such cases, it is natural to approximate it in some $L^{p}$ norm rather than in
the pointwise sense. For the Hermite expansion centered at $\xb$, one sets
\begin{equation}
a_{\alpha,n}(t,x)
    =  \sum_{|\b|=n} \< \Hv_\b(\cdot-\bar{x}) , a_{\alpha}(t,\cdot) \>_{\Gam} \Hv_\b(x-\bar{x}),\qquad { 0\leq n\leq N},\qquad |\a|\le 2 ,
\end{equation}
where the inner product $\<\cdot,\cdot\>_\Gam$ is an integral over $\Rb^d$ with a Gaussian
weighting centered at $\xb$ and the functions $\Hv_\beta(x) = H_{\beta_1}(x_1) \cdots
H_{\beta_d}(x_d)$ where $H_n$ is the $n$-th one-dimensional Hermite polynomial (properly
normalized so that $\< \Hv_\alpha, \Hv_\beta \>_\Gam = \delta_{\alpha,\beta}$ with
$\delta_{\alpha,\beta}$ being the Kronecker's delta function).
\end{example}

\section{Main results: closed-form solutions, local and global error bounds}\label{sec:approximating1}\setcounter{equation}{0}
The main idea behind the construction of an approximation for the solution $u$ of \eqref{eqpde} is very intuitive. { We begin this section by presenting the derivation of a formal expansion of $u$. Let us consider a polynomial expansion $\left(a_{\alpha,n}\right)_{n\in \N_0}$ and let us assume that 
the operator $\Ac$ in \eqref{operator_A} can be formally written as}
\begin{align}
  \Ac &=  \sum_{n=0}^\infty \Ac_n, &
  \Ac_n&:= \sum_{|\alpha |\leq 2}  a_{\alpha,n}(t,x) D^{\alpha}_{x}. \label{eq:A.expand}
\end{align}
We now follow the classical approach and expand the solution $u$ of \eqref{eqpde} as follows
\begin{align}
  u  &=  \sum_{n=0}^\infty u_n . \label{eq:v.expand}
\end{align}
Inserting \eqref{eq:A.expand} and \eqref{eq:v.expand} into \eqref{eqpde} we find that the
functions $(u_{n})_{n\geq 0}$ satisfy the following sequence of nested Cauchy problems
\begin{align}\label{eq:v.0.pide}
&\begin{cases}
 (\d_t + \Ac_0 ) u_0(t,x) =  0,\qquad & t\in[0,T[,\ x\in\mathbb{R}^d, \\
 u_0(T,x) =  \phi(x),& x \in\mathbb{R}^d,
\end{cases}
\intertext{and} \label{eq:v.n.pide} &\begin{cases}
 (\d_t + \Ac_0 ) u_n(t,x) =  - \sum\limits_{h=1}^{n} \Ac_h u_{n-h}(t,x),\qquad &  t\in[0,T[,\ x\in\mathbb{R}^d, \\
 u_n(T,x) =  0, &    x \in\mathbb{R}^d.
\end{cases}
\end{align}
Since, by assumption, the functions {$a_{\alpha,0}$} depend only on $t$, the operator $\Ac_{0}$ is
elliptic with time-dependent coefficients. It will be useful to write the operator $\Ac_0$ in the
following form:
\begin{align}\label{def_A0_bis}
\Ac_0
    &=\frac{1}{2} \sum_{i,j=1}^{d} C_{ij}(t)\p_{x_{i}x_{j}} + \langle m(t), \nabla_{x} \rangle
 +\g(t) , &
\langle m(t), \nabla_{x} \rangle
    &=  { \sum_{i=1}^d m_i(t) \d_{x_i} } .
\end{align}
Here the $d\times d$-matrix $C$ is positive definite, uniformly for $t\in[0,T]$, and $m$ and
$\gamma$ are a $d$-dimensional vector and a scalar functions respectively.
\begin{example}
If $d=2$ we have
\begin{align}
 C&=\left(
 \begin{array}{cc}
  2 a_{(2,0),0} & a_{(1,1),0} \\
  a_{(1,1),0}  & 2 a_{(0,2),0}
 \end{array}
  \right),&
 m&= \left(  \hspace{-5pt}
\begin{array}{rl}
 a_{(1,0),0} , & \hspace{-5pt}  a_{(0,1),0}
\end{array}
\hspace{-5pt}  \right), & \gamma &= a_{(0,0),0}.
\end{align}
\end{example}
It is clear that the leading term $u_0$ in the expansion \eqref{eq:v.expand} is explicitly
given by
\begin{align}\label{e10}
 u_0(t,x) &=  \ee^{\int_t^T \g(s) \dd s} \int\limits_{\mathbb{R}^d} \Gamma_{0}(t,x;T,y) \phi(y)\, \dd y ,\quad t<T,\ x\in\mathbb{R}^d,
\end{align}
where $\Gamma_{0}$ is the $d$-dimensional Gaussian density
\begin{align}
 \Gamma_{0}(t,x;T,y) &=  \frac{1}{  \sqrt{(2\pi)^{d}|\Cv(t,T)|} }
    \exp\left(-\frac{1}{2}\langle\Cv^{-1}(t,T) (y - x-\mv(t,T)),
    (y -x-\mv(t,T))\rangle\right) \label{e22and}
\end{align}
with covariance matrix $\Cv(t,T)$ and mean vector $x+\mv(t,T)$ given by:
\begin{align}\label{eq:covariance_mean}
 \Cv(t,T)=  \int_t^T C(s) \dd s,\qquad\qquad \mv(t,T)=   \int_t^T m(s) \dd s.
\end{align}
The first main result of the paper is Theorem \ref{th:un_general_repres} below.  The {theorem}
provides \emph{an explicit representation for each $u_n$} in \eqref{eq:v.expand}.  Remarkably,
every $u_n$ can be written as a finite sum of spatial derivatives acting on $u_0$.
\begin{theorem}\label{th:un_general_repres}
For any $n\geq 1$, the $n$-th term $u_n$ in \eqref{eq:v.expand} is given by
\begin{align}\label{eq:un}
u_n(t,x)
    &=  \Lc^{x}_n(t,T) u_0(t,x), &
t
    &<  T,\, x\in\mathbb{R}^d.
\end{align}
In \eqref{eq:un}, $\Lc_n^{x}(t,T)$ denotes the differential operator acting on the
$x$-variable and defined as
\begin{align}\label{eq:def_Ln}
 \Lc_n^{x}(t,T):=  \sum_{h=1}^n \int_{s_0}^T \dd s_1 \int_{s_1}^T \dd s_2 \cdots \int_{s_{h-1}}^T
\dd s_h
      \sum_{i\in I_{n,h}}\Gc^{x}_{i_{1}}(s_0,s_1) \cdots \Gc^{x}_{i_{h}}(s_0,s_h) ,
\end{align}
where\footnote{ For instance, for $n=3$ we have $I_{3,3}=\{(1,1,1)\}$, $I_{3,2}=\{(1,2),(2,1)\}$
and $I_{3,1}=\{(3)\}$. }
\begin{align}\label{eq:def_Ln_bis}
I_{n,h}
    &=  \{i=(i_{1},\dots,i_{h})\in\mathbb{N}^{h} \mid i_{1}+\dots+i_{h}=n\} , &
1
    & \le h \le n ,
\end{align}
and the operator $\Gc^{x}_{n}(t,s)$ is defined as
\begin{align}\label{def_Gn}
\Gc^{x}_{n}(t,s)
    &:= \sum_{|\alpha| \leq 2} a_{\alpha,n}\big(s,\Mc^x(t,s)\big) D^{\alpha}_{x},
\end{align}
with
\begin{align} \label{eq:M}
\Mc^{x}(t,s) &= x+\mv(t,s)+ \Cv(t,s)\nabla_{x}.
\end{align}
\end{theorem}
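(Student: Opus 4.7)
My plan is to proceed by induction on $n$, combining Duhamel's principle with a commutation identity between the propagator of $\Ac_0$ and polynomial-coefficient differential operators. Introduce the propagator
\begin{equation*}
P_0(t,s)f(x) := \exp\!\left(\int_t^s \g(r)\,\dd r\right) \int_{\R^d}\Gam_0(t,x;s,y)f(y)\,\dd y
\end{equation*}
associated with $(\d_t+\Ac_0)$, so that by Duhamel's principle applied to \eqref{eq:v.0.pide}--\eqref{eq:v.n.pide},
\begin{equation*}
u_0(t,\cdot) = P_0(t,T)\phi, \qquad u_n(t,\cdot) = \sum_{h=1}^n \int_t^T P_0(t,s_1)\,\Ac_h(s_1)\,u_{n-h}(s_1,\cdot)\,\dd s_1.
\end{equation*}

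The algebraic workhorse is the pair of identities
\begin{equation*}
P_0(t,s)\bigl[y_i f(y)\bigr](x) = \Mc^x(t,s)_i\,\bigl[P_0(t,s)f\bigr](x), \qquad P_0(t,s)\bigl[\d_{y_i}f\bigr](x) = \d_{x_i}\bigl[P_0(t,s)f\bigr](x),
\end{equation*}
valid for sufficiently regular $f$. The second identity is integration by parts using $\d_{y_i}\Gam_0=-\d_{x_i}\Gam_0$; the first follows from the Gaussian identity $(y-x-\mv(t,s))_i\Gam_0 = \Cv_{ij}(t,s)\,\d_{x_j}\Gam_0$. Because $\Cv$ is symmetric the components of $\Mc^x(t,s)$ commute pairwise, so conjugation $L \mapsto P_0(t,s)\,L\,P_0(t,s)^{-1}$ defines a homomorphism on the Weyl-algebra-generated differential operators in $y$ sending $y_i \mapsto \Mc^x(t,s)_i$ and $\d_{y_i}\mapsto\d_{x_i}$. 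Equivalently, for $L=\sum_\a b_\a(y)\d_y^\a$ with $b_\a$ polynomial,
\begin{equation}\label{plan:commut}
P_0(t,s)\,L(s,y,\d_y) = L(s,\Mc^x(t,s),\d_x)\,P_0(t,s).
\end{equation}

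The base case $n=1$ follows at once by applying \eqref{plan:commut} to $\Ac_1(s_1)$ and using the Chapman--Kolmogorov property $P_0(t,s_1)u_0(s_1,\cdot)(x) = u_0(t,x)$, giving $u_1(t,x) = \int_t^T \Gc^x_1(t,s_1)u_0(t,x)\,\dd s_1$ and matching \eqref{eq:def_Ln} since $I_{1,1}=\{(1)\}$. For the inductive step, substitute the hypothesis $u_{n-h}(s_1,y) = \Lc^y_{n-h}(s_1,T)u_0(s_1,y)$ into the Duhamel formula, apply \eqref{plan:commut} to the polynomial-coefficient operator $\Ac_h(s_1)\Lc^y_{n-h}(s_1,T)$ to push it through $P_0(t,s_1)$, and again invoke Chapman--Kolmogorov. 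The decisive computation is that each factor $\Gc^y_{i_k}(s_1,s_k)$ inside $\Lc^y_{n-h}(s_1,T)$ becomes exactly $\Gc^x_{i_k}(t,s_k)$ after conjugation: composition of the outer shift $y\mapsto\Mc^x(t,s_1)$ with the inner $\Mc^y(s_1,s_k)$ yields $\Mc^x(t,s_k)$, thanks to the additive relations $\mv(t,s_1)+\mv(s_1,s_k)=\mv(t,s_k)$ and $\Cv(t,s_1)+\Cv(s_1,s_k)=\Cv(t,s_k)$ coming directly from \eqref{eq:covariance_mean}. A final bookkeeping step regroups the double sum over $h$ and over multi-indices $i'\in I_{n-h,\cdot}$ according to the leading index $i_1=h$, reassembling the expression into \eqref{eq:def_Ln}.

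The main obstacle is verifying this conjugation-by-shift step cleanly: one must check that pushing the entire product of inner $\Gc^y_{i_k}(s_1,s_k)$ through $P_0(t,s_1)$ collapses to a product of $\Gc^x_{i_k}(t,s_k)$ with no surviving cross terms, which rests on time-additivity of $\mv,\Cv$ together with commutativity of the components of $\Mc^x$ (needed so that the substitution of the polynomial $a_{\alpha,n}(s,\cdot)$ at the argument $\Mc^x$ is unambiguous and respects composition). The integration-by-parts step underlying \eqref{plan:commut} is routine under Assumption~\ref{assumption:parabol_holder_bonded} and the Gaussian decay of $\Gam_0$.
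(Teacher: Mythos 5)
Your plan is correct, and it diverges from the paper's proof at exactly the one step where the two arguments genuinely differ. Both proofs share the outer structure (Duhamel's principle plus induction on $n$) and both rest on the same one-step intertwining: multiplication by $y$, when pushed through the Gaussian propagator, becomes the operator $\Mc^{x}(t,s)$. The crux in both proofs is the multi-factor generalization, i.e.\ that
\begin{equation*}
\int_{\R^{d}} \Gam_{0}(t,x;s,\xi)\,\Gc^{\xi}_{i_{1}}(s,s_{1})\cdots\Gc^{\xi}_{i_{j}}(s,s_{j})\, u_{0}(s,\xi)\,\dd\xi \;=\; \Gc^{x}_{i_{1}}(t,s_{1})\cdots\Gc^{x}_{i_{j}}(t,s_{j})\, u_{0}(t,x).
\end{equation*}
The paper establishes this (Corollary \ref{fund_corol} and \ref{fund_corol2}) by introducing the \emph{dual} operator $\bar{\Gc}^{y}$ acting on the forward variable, together with the bridge identity $\Gc^{x}_{n}(t,s)\Gam_{0}(t,x;T,y) = \bar{\Gc}^{y}_{n}(s,T)\Gam_{0}(t,x;T,y)$ obtained from Chapman--Kolmogorov; at each inductive step the new $\Gc^{\xi}$ factor is bounced to the $y$-side, where it commutes past the $\xi$-integral, and only reconverted to a $\Gc^{x}$ at the end. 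You avoid the dual operator entirely: you observe that the intertwining $P_{0}(t,s_{1})\,L(y,\d_{y}) = \Psi(L)\,P_{0}(t,s_{1})$ is multiplicative on Weyl-algebra elements (so you can push the whole product through at once), and that under the substitution $y\mapsto\Mc^{x}(t,s_{1})$, $\d_{y}\mapsto\d_{x}$ the inner operator $\Mc^{y}(s_{1},s_{k})$ collapses to $\Mc^{x}(t,s_{k})$ because $\mv$ and $\Cv$ are additive over consecutive time intervals (equation \eqref{eq:covariance_mean}). That time-additivity is of course equivalent to Chapman--Kolmogorov for the Gaussian kernel, so the two arguments exploit the same underlying fact, but yours makes the composition law explicit and sidesteps the duality machinery. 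In exchange, the paper's route works entirely at the level of explicit kernel manipulations, whereas yours leans on an algebraic homomorphism whose validity on the relevant (non-polynomial-coefficient) operators needs the commutativity of the components of $\Mc^{x}$ -- which you correctly attribute to the symmetry of $\Cv$, while the paper proves it by a different argument (acting on $\Gam_{0}$ and its derivatives).

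Two minor presentation points. First, you describe $\Ac_{h}(s_{1})\Lc^{y}_{n-h}(s_{1},T)$ as a ``polynomial-coefficient operator,'' which it is not -- the factors $\Gc^{y}_{i_{k}}$ have operator-valued coefficients. This is harmless for your argument since you immediately invoke the Weyl-algebra-homomorphism extension, but the phrasing should be tightened. Second, the regrouping of the triple sum (over $h$, over the inner length $j$, and over $i\in I_{n+1-h,j}$) into the form \eqref{eq:def_Ln} with index sets $I_{n+1,l}$ is stated but not carried out; it works, and the paper does this reindexing explicitly at the end of Section~\ref{appendix:proof}, so you should either perform it or point to that computation.
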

\noindent Theorem \ref{th:un_general_repres} will be proved in Section \ref{appendix:proof}.
\begin{remark}
Particular cases of Theorem \ref{th:un_general_repres} have been already stated, devoid of proof,
in \cite{lorig-pagliarani-pascucci-2} and \cite{lorig-pagliarani-pascucci-3}.  In
\cite{lorig-pagliarani-pascucci-2}, only time-homogeneous two-dimensional diffusions are treated.
In \cite{lorig-pagliarani-pascucci-3}, only the Taylor series expansion of $\Ac$ is treated.
\end{remark}
Our second main result consists in local-in-time error bounds for the { $N$-th order} Taylor expansion of Example
\ref{example:Taylor}.  In what follows, it will be helpful to indicate explicitly the dependence on
$\xb$, the expansion point of the Taylor series.  As such, we introduce the following notation:
for $n\le N$ and $\xbar\in\R^{d}$, we set
\begin{align} \label{eq:A.expand.xbar}
 \Ac^{(\bar{x})}_n &= \sum_{|\alpha |\leq 2} a_{\alpha,n}^{(\bar{x})} D_{x}^{\alpha},&
 a_{\alpha,n}^{(\bar{x})}(t,x)&=  \sum_{|\b|=n} \frac{D^{\b}a_{\alpha}(t,\bar{x})}{\b!}(x-\bar{x})^{\b} .
\end{align}
The approximating terms $u_n=u^{(\bar{x})}_n$ in the expansion \eqref{eq:v.expand} solve
\begin{align}\label{eq:v.0.pide.xbar}
  \begin{cases}
 \left(\d_t + \Ac^{(\bar{x})}_0\right) u^{(\bar{x})}_0(t,x) =  0,\qquad &  t<T,\ x\in\mathbb{R}^d, \\
  u^{(\bar{x})}_0(T,x) =  \phi(x), & x \in\mathbb{R}^d,
  \end{cases}
\end{align}
and for { $1\leq n\leq N$}
\begin{align}\label{eq:v.n.pide.xbar}
  \begin{cases}
 \left(\d_t + \Ac^{(\bar{x})}_0\right) u^{(\bar{x})}_n(t,x) =  - \sum\limits_{h=1}^{n} \Ac^{(\bar{x})}_h u^{(\bar{x})}_{n-h}(t,x),\qquad &  t<T,\ x\in\mathbb{R}^d, \\
  u^{(\bar{x})}_n(T,x) = 0, & x \in\mathbb{R}^d.
  \end{cases}
\end{align}
Next, we define the approximate solution at order $N$ for the Taylor expansion centered at $\xbar$ as
\begin{align}\label{Taylor_approx_k_N_xbar}
 \bar{u}^{(\bar{x})}_N(t,x):=
 \sum_{n=0}^N u^{(\bar{x})}_n(t,x).
\end{align}
For the particular choice $\xbar=x$, we simply set
\begin{align}\label{eq:def_ubar_N}
\bar{u}_N(t,x):=\bar{u}^{(x)}_N(t,x) .
\end{align}
We call $\bar{u}_N$ the \emph{$N$-th order Taylor approximation of $u$}.  Analogously, for the fundamental solution $\Gamma$ of $(\p_{t}+\Ac)$, we set
\begin{align}
\bar{\Gamma}_{N}(t,x;T,y)=\bar{\Gamma}^{(x)}_{N}(t,x;T,y). \label{eq:new}
\end{align}

\begin{theorem}\label{th:error_estimates_taylor}
Let Assumption \ref{assumption:parabol_holder_bonded} hold and let $0<T\leq \overline{T}$.
Assume also the initial datum $\phi\in C_{b}^{k-1,1}\left(\mathbb{R}^d\right)$ for some
$0 \leq k\leq 2$. Then we have
\begin{align}\label{eq:error_estimate}
 \left| u(t,x)-\bar{u}_N(t,x) \right| \leq C (T-t)^{\frac{N+k+1}{2}}, \qquad 0\leq t<T,\ x\in\mathbb{R}^d,
\end{align}
where the constant $C$ only depends on $M,N,\overline{T}$ and $\left\|\varphi\right\|_{C^{k-1,1}_{b}}$.  Moreover, for any $\eps>0$ we have
\begin{align}\label{th:error_estim_fund_solution}
 \left| \Gamma(t,x;T,y)-\bar{\Gamma}_{N}(t,x;T,y) \right| \leq C (T-t)^{\frac{N+1}{2}}\Gamma^{M+\varepsilon} (t,x;T,y), \qquad 0\leq t<T,\
 x,y\in\mathbb{R}^d,
\end{align}
where $\Gamma^{M+\varepsilon}(t,x;T,y)$ is the fundamental solution of the $d$-dimensional heat
operator
\begin{align}\label{eq:heatoperator}
H^{M+\varepsilon}=(M+\varepsilon) \sum_{i=1}^d \partial^2_{x_i}+\partial_t,
\end{align} and $C$ is a positive constant that depends on $M,
N,\overline{T},\varepsilon$.
\end{theorem}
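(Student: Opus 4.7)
The plan is to use a Duhamel-type representation of the error combined with Gaussian upper bounds, turning Taylor-remainder smallness in space into time-decay.

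Since $u_0^{(\bar{x})}(T,\cdot) = \phi$ and $u_n^{(\bar{x})}(T,\cdot) = 0$ for $n\ge 1$, the approximation $\bar{u}_N^{(\bar{x})}$ matches the terminal datum, and summing \eqref{eq:v.0.pide.xbar}--\eqref{eq:v.n.pide.xbar} yields
\begin{equation*}
(\p_t+\Ac)\bar{u}_N^{(\bar{x})} = \bigl(\Ac-\Ac_0^{(\bar{x})}\bigr)\bar{u}_N^{(\bar{x})} - \sum_{h=1}^N \Ac_h^{(\bar{x})}\,\bar{u}_{N-h}^{(\bar{x})} =: R_N^{(\bar{x})}.
\end{equation*}
Thus $u-\bar{u}_N^{(\bar{x})}$ has zero terminal data and source $-R_N^{(\bar{x})}$; applying Duhamel's principle with the fundamental solution $\Gamma$ of $\p_t+\Ac$ and then specializing $\bar{x}=x$ (legitimate because $\bar{x}$ is a free parameter independent of the integration variables) gives
\begin{equation*}
u(t,x)-\bar{u}_N(t,x) = -\int_t^T\!\int_{\R^d} \Gamma(t,x;s,y)\,R_N^{(x)}(s,y)\,\dd y\,\dd s.
\end{equation*}

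Next, I would regroup $R_N^{(x)} = \sum_{m=0}^N S_{N-m}^{(x)}\,u_m^{(x)}$, where $S_j^{(x)} := \Ac - \sum_{h=0}^j \Ac_h^{(x)}$ has coefficients given by the $j$-th Taylor remainder $r_{\alpha,j}^{(x)}(s,y) = a_\alpha(s,y)-\sum_{h=0}^j a_{\alpha,h}^{(x)}(s,y)$, controlled by $C|y-x|^{j+1}$ under Assumption \ref{assumption:parabol_holder_bonded}ii). Theorem \ref{th:un_general_repres} expresses each $u_m^{(x)}$ as an explicit, $m$-fold time-integrated differential operator $\Lc_m^x(s,T)$ applied to the Gaussian $u_0^{(x)}$. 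From this representation one derives pointwise heat-kernel bounds on $D_y^\alpha u_m^{(x)}(s,y)$; crucially, the $C_b^{k-1,1}$-regularity of $\phi$ is exploited by transferring up to $k$ of the $y$-derivatives hitting $u_0^{(x)}$ onto $\phi$ via integration by parts, saving a factor of $(T-s)^{k/2}$ over the raw bound, while the nested time integrals inside $\Lc_m^x$ contribute an additional $(T-s)^{m/2}$ gain.

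Combining the Aronson-type bound $\Gamma(t,x;s,y)\le C\,\Gamma^{M+\eps}(t,x;s,y)$ with the Gaussian moment identity
\begin{equation*}
|y-x|^{j+1}\,\Gamma^{M+\eps}(t,x;s,y) \le C_j\,(s-t)^{(j+1)/2}\,\Gamma^{M+2\eps}(t,x;s,y),
\end{equation*}
and integrating out the $y$-variable by the Chapman-Kolmogorov identity, the error reduces to a finite sum of Beta-type one-dimensional integrals of the form $\int_t^T (s-t)^{(N-m+1)/2}(T-s)^{(m+k-|\alpha|)/2}\,\dd s$ (for each $0\le m\le N$ and $|\alpha|\le 2$), each of which evaluates to a constant multiple of $(T-t)^{(N+k+1)/2}$ or better, proving \eqref{eq:error_estimate}. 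The fundamental-solution estimate \eqref{th:error_estim_fund_solution} follows by running the same scheme with $\phi$ formally replaced by the Dirac mass $\delta_y$: no $C_b^{k-1,1}$-gain is applied (formally $k=0$) and the last Chapman-Kolmogorov step preserves the pointwise Gaussian factor $\Gamma^{M+\eps}(t,x;T,y)$ on the right-hand side.

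The main obstacle is the last step: the $(s-t)^{(N-m+1)/2}$ gain from the Taylor remainder in $S_{N-m}^{(x)}$ and the $(T-s)^{m/2}$ gain from the nested time-integrals embedded in $\Lc_m^x$ must jointly compensate the $(T-s)^{-(|\alpha|-k)/2}$ blow-up of high-order $y$-derivatives of $u_m^{(x)}$, so that the resulting Beta exponent is exactly $(N+k+1)/2$ uniformly in $m$ and $|\alpha|\le 2$. Carrying out this bookkeeping -- in particular tracking how the frozen Gaussian operator $\Mc^x(s,T) = x+\mv(s,T)+\Cv(s,T)\nabla_x$ inside $\Lc_m^x$ mixes $x$- and $y$-dependence so that the smoothness of $\phi$ can be fully exploited -- is the technical heart of the argument, and is delicate when $k=2$ since then only the first derivative of $\phi$ is classically Lipschitz, forcing the final integration by parts to be justified in a weak/distributional sense.
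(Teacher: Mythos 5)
Your overall strategy—Duhamel representation of the error, regrouping the source into Taylor remainders of the coefficients hitting the $u_m^{(x)}$'s, pointwise derivative bounds on $u_m^{(x)}$ via Theorem~\ref{th:un_general_repres}, Gaussian-moment absorption of the $|y-x|$ factors, and Beta-type time integrals—is the paper's strategy. Lemma~\ref{lemma:taylor1} is exactly your Duhamel identity (note your representation has the wrong sign, but this is harmless for the estimate), and Lemma~\ref{lem:der_u0} and Proposition~\ref{l1} are your derivative bounds. Your way of exploiting the $C_b^{k-1,1}$ regularity of $\phi$ (integrating by parts onto $\phi$) differs slightly from the paper's device (subtracting the Taylor polynomial $\mathbf{T}^{\varphi}_{x,h-1}$ of $\phi$ and using that polynomials of low degree are annihilated by $D_x^\beta\Gamma_0$ under integration), but the two are essentially equivalent and buy the same factor.

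However, there is a genuine gap in the final step. Your claimed reduction to one-dimensional integrals of the form $\int_t^T (s-t)^{(N-m+1)/2}(T-s)^{(m+k-|\alpha|)/2}\,\dd s$, "each of which evaluates to a constant multiple of $(T-t)^{(N+k+1)/2}$ or better," is false in the corner case $m=0$, $|\alpha|=2$, $k=0$: the $(T-s)$-exponent is $-1$ and the integral diverges. (Equivalently, this is the term where the full $N$-th order Taylor remainder of a second-order coefficient $a_\alpha$ multiplies $D^\alpha u_0^{(x)}$, and $D^\alpha u_0^{(x)}$ for $|\alpha|=2$ blows up like $(T-s)^{-1}$ when $\phi$ is merely bounded.) The paper resolves this by splitting $I_n = I_{n,1}+I_{n,2}$ into contributions from $|\alpha|\leq 1$ and $|\alpha|=2$, and for $I_{n,2}$ performing one more integration by parts \emph{inside the Duhamel integral}, moving a single $\xi$-derivative off $D_\xi^\alpha u_{N-n}^x$ onto the product $\bigl(a_{\alpha}-\mathbf{T}^{a_\alpha}_{x,n}\bigr)\Gamma(t,x;s,\cdot)$. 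This drops the worst $(T-s)$-exponent from $-1$ to $-1/2$ (at the cost of either losing one power of $|\xi-x|$ in the remainder or gaining a $(s-t)^{-1/2}$ from differentiating $\Gamma$, both benign), and restores convergence with the correct final exponent $(N+k+1)/2$. You flag the bookkeeping as the technical heart, but the integration by parts you identify (onto $\phi$, to handle $k=2$) is a different and less critical one; the missing step is the integration by parts on the second-order source terms, without which the Beta integral simply does not converge.
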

\noindent Theorem \ref{th:error_estimates_taylor} will be proved in Section \ref{smalla}.
\begin{remark}\label{rem:put}
Theorem \ref{th:error_estimates_taylor} can be extended by relaxing the regularity hypotheses on
the terminal data $\phi$. More precisely, if $k\in\N$, it is sufficient to assume that $\phi \in
C^{k-1}$ and the that derivatives are \emph{locally} Lipschitz continuous with exponential growth
at infinity.  In this case, estimate \eqref{eq:error_estimate} would be modified by substituting
the constant $C$ by $C e^{C|x|}$. As we shall see in Section \ref{sec:model}, such an extension
would allow for including some important functions $\phi$ commonly used in financial applications,
such as the Call payoff function.  Even though this generalization does not change the core of the
proof of Theorem \ref{th:error_estimates_taylor}, in order to avoid an excess of technicalities,
we shall continue our analysis under the more restrictive hypotheses of Theorem
\ref{th:error_estimates_taylor}.
\end{remark}

We remark explicitly that \eqref{eq:error_estimate} does not imply convergence as $N$ goes to
infinity because the constant $C$, appearing in the estimate, depends on $N$ and, in principle,
this constant can blow up in the limit {as $N \to \infty$}. Thus, the usefulness of Theorem
\ref{th:error_estimates_taylor} is as an asymptotic estimate for small times.
\par
Now, we state more general convergence estimates that are valid on any time interval $[t,T]$. For
any $m\in\mathbb{N}$ we consider the equispaced partition $\{t_{0},\dots,t_{m}\}$ of $[t,T]$
defined as
\begin{align}
t_k
    &:= t + k \, \delta_{m} , &
\delta_{m}
    &:= \frac{T-t}{m} .
\end{align}
Moreover, we set
\begin{align}\label{def:u_N_m}
\bar{u}_{N,m}(t_0,x_0)
    &:= \int\limits_{\mathbb{R}^{m d}} \prod_{i=1}^{m}
            \bar{\Gamma}_{N}(t_{i-1},x_{i-1};t_{i},x_{i})\phi(x_m)\, \dd x_1 \cdots \dd x_m, &
x_0
    &\in\mathbb{R}^d,
\end{align}
where $\bar{\Gamma}_{N}$ is the $N$th order Taylor approximation of $\G$.
\begin{theorem}\label{th:bootstrapping_convergence}
Assume $N\ge 1$. Under the assumptions of Theorem \ref{th:error_estimates_taylor} we have
\begin{align}\label{eq:convergence}
 \left|u(t,x)-\bar{u}_{N,m}(t,x) \right| \leq C \left(\frac{T-t}{m}\right)^{\frac{N+k-1}{2}}, \qquad 0\leq t<T\leq \overline{T},\ x\in\mathbb{R}^d,
\end{align}
where the constant $C$ only depends on $M,N,\overline{T}$ and
$\left\|\varphi\right\|_{C^{k-1,1}_{b}}$.
\end{theorem}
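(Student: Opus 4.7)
The plan is to combine the single-interval estimate of Theorem~\ref{th:error_estimates_taylor} with a telescoping (``bootstrapping'') argument over the partition $t=t_0<t_1<\cdots<t_m=T$. For $0\le j\le m$ I would introduce the backward propagations of $\phi$ from $T$ to $t_j$ under the exact and the approximate kernels:
\begin{align}
v_j(y) &:= \int_{\R^{(m-j)d}} \prod_{i=j+1}^{m} \G(t_{i-1},x_{i-1};t_i,x_i)\,\phi(x_m)\,\dd x_{j+1}\cdots\dd x_m,\\
\widetilde v_j(y) &:= \int_{\R^{(m-j)d}} \prod_{i=j+1}^{m} \bar\G_N(t_{i-1},x_{i-1};t_i,x_i)\,\phi(x_m)\,\dd x_{j+1}\cdots\dd x_m,
\end{align}
with $x_j=y$ in both integrals, so that $v_0(x)=u(t,x)$, $\widetilde v_0(x)=\bar u_{N,m}(t,x)$, and $v_m=\widetilde v_m=\phi$. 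The identity $v_0=u(t,x)$ relies on the Chapman--Kolmogorov property of the true fundamental solution $\G$.

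Let $\mathcal{G}_j$ and $\bar{\mathcal G}_j$ denote, respectively, the exact and approximate one-step propagators on $[t_j,t_{j+1}]$, so that $v_j=\mathcal{G}_j v_{j+1}$ and $\widetilde v_j=\bar{\mathcal G}_j\widetilde v_{j+1}$. Inserting the intermediate function $w_j:=\bar{\mathcal G}_j v_{j+1}$ produces the decomposition
\begin{align}
v_j-\widetilde v_j \;=\; (\mathcal{G}_j-\bar{\mathcal G}_j)\,v_{j+1} \;+\; \bar{\mathcal G}_j\,(v_{j+1}-\widetilde v_{j+1}).
\end{align}
For the first summand I would apply Theorem~\ref{th:error_estimates_taylor} to the Cauchy problem on $[t_j,t_{j+1}]$ (of length $\delta_m$) with terminal datum $v_{j+1}$; assuming $v_{j+1}\in C^{k-1,1}_b(\R^d)$ uniformly in $j$, this gives $\|(\mathcal{G}_j-\bar{\mathcal G}_j)v_{j+1}\|_{L^\infty}\le C\,\delta_m^{(N+k+1)/2}$. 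For the second summand I would use the Gaussian upper bound implicit in~\eqref{th:error_estim_fund_solution}, together with the estimate $\|\G(t_j,\cdot;t_{j+1},\cdot)\|_{L^1(\dd y)}\le 1+C\delta_m$ (obtained by applying the maximum principle to the constant function $1$ and absorbing the zero-order coefficient $a$), to deduce $\|\bar{\mathcal G}_j\|_{L^\infty\to L^\infty}\le 1+C\delta_m$. Setting $e_j:=\|v_j-\widetilde v_j\|_{L^\infty}$, the recursion $e_j\le C\,\delta_m^{(N+k+1)/2}+(1+C\delta_m)\,e_{j+1}$ with $e_m=0$ then iterates to
\begin{align}
e_0 \;\le\; m(1+C\delta_m)^m\,C\,\delta_m^{(N+k+1)/2} \;\le\; e^{C\overline T}\,C\,(T-t)\,\delta_m^{(N+k-1)/2},
\end{align}
which matches~\eqref{eq:convergence} after adjusting constants.

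The main obstacle is the hypothesis used above, namely the uniform-in-$(j,m)$ regularity of $v_{j+1}$ needed to invoke Theorem~\ref{th:error_estimates_taylor} with a constant depending only on $\|\phi\|_{C^{k-1,1}_b}$. Since $v_{j+1}(\cdot)=u(t_{j+1},\cdot)$ is the classical solution of $(\p_t+\Ac)v=0$ with terminal datum $\phi$ at $T$, this is essentially the standard parabolic a priori statement that, under Assumption~\ref{assumption:parabol_holder_bonded}, $\|u(t_{j+1},\cdot)\|_{C^{k-1,1}_b}\le C\|\phi\|_{C^{k-1,1}_b}$ for $0\le k\le 2$, uniformly in $t_{j+1}\in[0,\overline T]$. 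This can be proved by differentiating the Feynman--Kac representation of $v_{j+1}$ up to $k-1$ times and using the boundedness of the coefficients and their derivatives in Assumption~\ref{assumption:parabol_holder_bonded}, with the Lipschitz bound on the top derivative following from propagation of Lipschitz regularity by $\Ac$. Once this step is secured, the telescoping computation above is mechanical and yields the claimed bound with $C$ depending only on $M,N,\overline T$ and $\|\phi\|_{C^{k-1,1}_b}$, using that $m\delta_m=T-t\le\overline T$.
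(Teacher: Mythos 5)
Your proposal is correct and follows essentially the same strategy as the paper's proof. The paper also uses the Chapman--Kolmogorov identity to obtain a telescoping decomposition (inserting one exact factor $\Gamma$, then the difference $\bar{\Gamma}_N-\Gamma$ on a single sub-interval, then approximate factors $\bar{\Gamma}_N$ on the rest), applies Theorem~\ref{th:error_estimates_taylor} on each sub-interval of length $\delta_m$ with terminal datum $u(t_{j},\cdot)$, invokes the Schauder-type a priori bound $\|u(t_{j},\cdot)\|_{C^{k-1,1}_b}\leq C\|\varphi\|_{C^{k-1,1}_b}$ (the paper's Lemma~\ref{lem:der_u}, cited from Friedman rather than proved via Feynman--Kac) to keep the per-interval constant uniform, and controls the accumulated product of one-step propagator norms by $(1+C\delta_m^{(N+1)/2})^{m}\leq e^{C\overline{T}^{(N+1)/2}}$, exactly paralleling your $\|\bar{\mathcal G}_j\|_{L^\infty\to L^\infty}\leq 1+C\delta_m$ bound (the two are equivalent here since $N\geq 1$). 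Your recursive write-up and the paper's explicit summation $u-\bar{u}_{N,m}=\sum_j I_j$ encode the same decomposition.
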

\noindent Theorem \ref{th:bootstrapping_convergence} will be proved in Section \ref{boota}. We note explicitly that, as a direct consequence of \eqref{eq:convergence}, we have
that if $N\geq 2-k$ then
\begin{align}
 \lim_{m\to\infty}\bar{u}_{N,m}(t,x)= u(t,x), \qquad t\in[0,T],\ x\in\mathbb{R}^d.
\end{align}
\begin{remark}
\label{rmk:bootstrap}
From \eqref{eq:un} and \eqref{eq:new} we see that
\begin{align}
\bar{\Gam}_N(t,x;T,y)
    &=  \( 1  + \sum_{i=1}^N  \Lc_i^x(t,T) \) \Gam_0(t,x;T,y) .
\end{align}
When the differential operator $(1 + \sum_i \Lc_i^x)$ hits the Gaussian kernel $\Gam_0(t,x;T,y)$
it simply returns a polynomial of $(x,y)$ times the Gaussian kernel $\Gam_0(t,x;T,y)$.  The
coefficients $(a_{\alpha,0})_{|\alpha|\leq 2}$ of the operator $(1 + \sum_i \Lc_i^x)$ also depend
on $x$ and are smooth by Assumption \ref{assumption:parabol_holder_bonded}, condition part ii).
Thus, evaluating \eqref{def:u_N_m} involves computing an $(d \cdot m)$-dimensional integral,
where the integrand is the product of Gaussian kernels with polynomials and smooth, bounded
coefficients.  Since the integrand is smooth and slowly varying, these integrals can be computed
numerically without major difficulties.  Though, clearly, there is a limit to how large $(d \cdot
m)$ can be.
\end{remark}

\section{Applications to financial mathematics}
\setcounter{equation}{0}
\label{sec:model} In this section we motivate our analysis by
illustrating how our methodology applies to the pricing derivatives in financial mathematics.  To
begin, we consider an arbitrage-free market. We take, as given, an equivalent martingale measure
$\Qb$ defined on a complete filtered probability space $(\Om,\Fc,\{\Fc_t,t\geq0\})$. All
stochastic processes defined below live on this probability space and all expectations are taken
with respect to $\Qb$. The risk-neutral dynamics of our market are described by the following
$d$-dimensional Markov diffusion
\begin{align}
\dd X_t
   &=  \mu(t,X_t) \dd t + \sig(t,X_t) \dd W_t . \label{eq:StochVol}
\end{align}
Here $W$ is a standard $m$-dimensional Brownian motion, the function $\mu : \Rb_+ \times \Rb^d \to
\Rb^d$ and the function $\sig: \Rb_+ \times \Rb^d \to \Rb^{ d \times m}$.  The components of $X$
could represent a number of things, e.g., economic factors, asset prices, economic indicators, or functions of these quantities.
{In particular, we assume a risk-free interest rate of the form $r(t,X_t)$ where $r : \Rb_+ \times \Rb^d \to \Rb_+$.}
We also introduce a random time $\zeta$, which is given by
\begin{align}
\zeta
    &=  \inf \big\{ t \geq 0 : \int_0^t \gam(s,X_s) \dd s \geq \Ec \big\} , &
\gam
        &:  { \Rb_+ \times \Rb^d \to \Rb_+ } ,
\end{align}
with $\Ec$ exponentially distributed and independent of $X$. The random time $\zeta$ could
represent the default time of an asset, the arrival of an economic shock, etc..
\par
Denote by $V$ the no-arbitrage price of a European derivative expiring at time $T$ with payoff
\begin{align}
H(X_T) \, \Ib_{\{\zeta > T\}} + G(X_T) \, \Ib_{\{\zeta \leq T\}}
    &=  \big( H(X_T) - G(X_T) \big) \, \Ib_{\{\zeta > T\}} + G(X_T) .
\end{align}
It is well known (see, for instance, \cite{yorbook}) that
\begin{align}\label{e1}
V_t
    &= { \Eb \[ \ee^{-\int_t^T r(s,X_s)  \dd s} G(X_T) | X_t \] +
        \Ib_{\{\zeta>t\}} \Eb \[\ee^{-\int_t^T \( r(s,X_s) + \gam(s,X_s) \) \dd s} \Big(H(X_T)-G(X_T) \Big)  | X_t \] , } &
t
    &<T .
\end{align}
Then, to value a European-style option, one must compute functions of the form
\begin{align}
u(t,x)
    &:= \Eb \[\ee^{- \int_t^T { \lam(s,X_s) } \dd s}  \phi(X_T) \mid X_t = x \] . \label{eq:v}
\end{align}
Under mild assumptions, the function $u$, defined by \eqref{eq:v}, satisfies the Kolmogorov
backward equation
\begin{align} \label{Cauchy_prob}
\begin{cases}
 (\d_t + \Ac) u(t,x)=  0 ,\qquad & t<T,\ x \in \Rb^d,\\
 u(T,x)=  \phi(x), &x \in \Rb^d,
\end{cases}
\end{align}
where the operator $\Ac$ is given explicitly by
\begin{align}
 \Ac &=  \frac{1}{2} \sum_{i,j=1}^d  \left(\sig \sig^\text{T}\right)_{ij}(t,x) \d_{x_i}\d_{x_j}
                + \sum_{i=1}^d \mu_i(t,x) \d_{x_i} - { \lam(t,x) } . \label{eq:A}
\end{align}
The results of Section \ref{sec:approximating1} give an explicit and effective way to construct
closed-form approximate solutions of problem \eqref{Cauchy_prob}, and therefore closed-form
approximate option prices \eqref{eq:v}. The rigorous error bounds prove the efficiency of the
approach and confirm the high accuracy of the approximation in financial applications.  {For the interested reader, extensive numerical examples can be found in \cite{pagliarani2011analytical}, \cite{pascucci}, \cite{lorig-pagliarani-pascucci-1}, \cite{lorig-pagliarani-pascucci-2} and \cite{lorig-pagliarani-pascucci-3}.}

\section{Proof of Theorem \ref{th:un_general_repres}: analytical approximation formulas}\label{appendix:proof}\setcounter{equation}{0}
The proof is based on the symmetry properties of the Gaussian fundamental solution
$\Gamma_{0}=\Gamma_{0}(t,x;s,\xi)$ as it is defined in \eqref{e22and}-\eqref{eq:covariance_mean},
combined with an extensive use of other very general relations such as the Duhamel's principle and
the Chapman-Kolmogorov equation which we recall for completeness.
\begin{lemma}[Chapman-Kolmogorov identity]\label{lem:semigroup_property}
Under Assumption \ref{assumption:parabol_holder_bonded}, for any $t<s<T$, $x,y\in\mathbb{R}^d$, we
have
\begin{align}\label{eq:semigroup_property}
\int_{\mathbb{R}^d} \Gamma(t,x;s,\xi)\Gamma(s,\xi;T,y)\, \dd \xi = \Gamma(t,x;T,y).
\end{align}
\end{lemma}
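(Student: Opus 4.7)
The plan is to appeal to the uniqueness of classical solutions of the backward Cauchy problem \eqref{eqpde}. Fix $s\in(t,T)$, $T$, and $y\in\mathbb{R}^d$, and define
\begin{align}
v(\tau,x) &:= \int_{\mathbb{R}^d}\Gamma(\tau,x;s,\xi)\,\Gamma(s,\xi;T,y)\,\dd\xi, & \tau\in[0,s),\ x\in\mathbb{R}^d,
\end{align}
together with $w(\tau,x):=\Gamma(\tau,x;T,y)$. I would show that both $v$ and $w$ solve the \emph{same} backward parabolic Cauchy problem on $[0,s]\times\mathbb{R}^d$, namely $(\d_\tau+\Ac)u=0$ on $[0,s)\times\mathbb{R}^d$ with terminal datum $u(s,x)=\Gamma(s,x;T,y)$, and then invoke uniqueness to conclude $v\equiv w$, which is exactly \eqref{eq:semigroup_property}.

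For the PDE, since $\Gamma(\cdot,\cdot;s,\xi)$ is the fundamental solution of $\d_\tau+\Ac$, one has $(\d_\tau+\Ac_x)\Gamma(\tau,x;s,\xi)=0$ for $\tau<s$; differentiating under the integral sign yields $(\d_\tau+\Ac)v(\tau,x)=0$ on $[0,s)\times\mathbb{R}^d$. This step is standard once one invokes the classical Gaussian upper bounds on $\Gamma$ and its spatial derivatives up to order two (guaranteed by Assumption \ref{assumption:parabol_holder_bonded} and alluded to in the introduction as one of the main ingredients of our analysis), together with the boundedness of $\xi\mapsto\Gamma(s,\xi;T,y)$ for $s<T$, which together furnish an integrable majorant uniformly for $(\tau,x)$ in compact sets of $[0,s)\times\mathbb{R}^d$.

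For the terminal condition, the defining property of the fundamental solution gives $\Gamma(\tau,x;s,\cdot)\rightharpoonup\delta_x$ as $\tau\to s^-$; combined with the continuity and boundedness of $\xi\mapsto\Gamma(s,\xi;T,y)$ (for $s<T$), this yields
\begin{align}
\lim_{\tau\to s^-}v(\tau,x)=\Gamma(s,x;T,y),
\end{align}
uniformly on compact sets in $x$. By construction, $w$ satisfies the same PDE and the same terminal condition.

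The main obstacle is the uniqueness step: one must ensure that both $v$ and $w$ lie in a class in which the backward Cauchy problem has a unique classical solution. Under Assumption \ref{assumption:parabol_holder_bonded}, the standard Tychonoff-type uniqueness theorem applies to solutions with at most exponential growth in $x$. Both $v$ and $w$ inherit Gaussian-type pointwise upper bounds from $\Gamma$ itself (for $v$, by integrating the product of two Gaussian-type kernels against itself), so they belong to the uniqueness class and must coincide on $[0,s)\times\mathbb{R}^d$. Taking $\tau=t$ gives the claimed identity.
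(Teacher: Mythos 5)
The paper does not actually prove this lemma: it states the Chapman--Kolmogorov identity as a classical fact that it ``recalls for completeness,'' deferring to the standard literature (Friedman's parabolic PDE book is the reference cited elsewhere in the paper for fundamental-solution facts). Your argument is therefore not a re-derivation of a proof in the paper, but it \emph{is} the correct standard proof and is essentially the one found in the cited references. You correctly identify the three ingredients: (a) $v(\tau,x):=\int_{\Rb^d}\Gamma(\tau,x;s,\xi)\Gamma(s,\xi;T,y)\,\dd\xi$ solves $(\d_\tau+\Ac)v=0$ on $[0,s)\times\Rb^d$, which requires differentiating under the integral, justified by the classical Gaussian bounds on $\Gamma$ and its $x$-derivatives up to order two (these bounds hold under Assumption \ref{assumption:parabol_holder_bonded} and are restated later in the paper as Lemma \ref{lem:gaussian_estimates}); (b) $v(\tau,x)\to\Gamma(s,x;T,y)$ as $\tau\to s^-$, since $\Gamma(s,\cdot;T,y)\in C_b(\Rb^d)$ for fixed $s<T$ and $\Gamma(\tau,x;s,\cdot)$ acts as an approximate identity; and (c) $w(\tau,x):=\Gamma(\tau,x;T,y)$ solves the same backward Cauchy problem with the same terminal datum and lies, like $v$, in the uniqueness class of functions with at most exponential-square growth, so the Tychonoff-type uniqueness theorem gives $v\equiv w$. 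The only point worth tightening is step (c): you should state explicitly that uniqueness for the backward Cauchy problem with bounded, uniformly elliptic, Lipschitz coefficients holds within the class $\{|u(\tau,x)|\le C e^{c|x|^2}\}$ (not merely exponential growth $e^{c|x|}$, which would be a weaker and also acceptable sub-class but does not capture a general Gaussian bound directly), and that both $v$ and $w$ satisfy Gaussian upper bounds of the form $C\,\Gamma^{M+\eps}(\tau,x;T,y)$ by Lemma \ref{lem:gaussian_estimates}, which places them in this class. With that clarification, the proof is complete.
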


We start by recalling the operator
\begin{align}
 \Mc^{x}(t,s) = x+\mv(t,s)+ \Cv(t,s)\nabla_x
\end{align}
as it is defined in \eqref{eq:M}.  Above, and throughout the proof, we use the superscript $x$ to
explicitly indicate the variables on which the operator acts. Furthermore, we define the operator
\begin{align}\label{def_Mbar}
 \bar{\Mc}^{y}(t,s) = y- \mv(t,s)+ \Cv(t,s)\nabla_y.
\end{align}
{The following lemma illustrates how the operator $\nabla_x$ relates to $\nabla_y$ when acting on
$\Gamma_{0}(t,x;s,y)$ and how the multiplication operators $y$ and $x$ relate to $\Mc^{x}(t,s)$
and $\bar{\Mc}^{y}(t,s)$ respectively, when acting on $\Gamma_{0}(t,x;s,y)$.}
\begin{lemma}
For any $t<s$ and $x,y\in\mathbb{R}^d$, we have
\begin{align}
\nabla_x \Gamma_{0}(t,x;s,y)
    &= -\nabla_{y} \Gamma_{0}(t,x;s,y),
\label{prop:deriv}
\end{align}
and
\begin{align}\label{prop:polyn1}
 y\, \Gamma_{0}(t,x;s,y) =&  \Mc^{x}(t,s)\Gamma_{0}(t,x;s,y),\\ \label{prop:polyn2} x\, \Gamma_{0}(t,x;s,y) =&
\bar{\Mc}^{y}(t,s)\Gamma_{0}(t,x;s,y).
\end{align}
\end{lemma}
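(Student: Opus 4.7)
The plan is to prove all three identities by direct differentiation of the explicit Gaussian formula \eqref{e22and}. The key observation is that $\Gamma_0(t,x;s,y)$ depends on $x$ and $y$ only through the translation-invariant quantity $z := y - x - \mv(t,s)$, so derivatives with respect to $x$ and $y$ differ only by a minus sign, and the multiplicative operators $x \cdot$ and $y \cdot$ can be re-expressed via the gradient by undoing the quadratic in the exponent.

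First, I would introduce the shorthand $z = z(t,x;s,y) := y - x - \mv(t,s)$ and compute directly
\begin{align}
\nabla_y \Gamma_0(t,x;s,y) &= -\Cv^{-1}(t,s)\,z\;\Gamma_0(t,x;s,y), \\
\nabla_x \Gamma_0(t,x;s,y) &= +\Cv^{-1}(t,s)\,z\;\Gamma_0(t,x;s,y),
\end{align}
since the normalizing prefactor is independent of $x,y$ and the two gradients of the quadratic form in the exponent differ only by the sign coming from $\partial_{x_i} z = -e_i$ versus $\partial_{y_i} z = e_i$. Identity \eqref{prop:deriv} is then immediate by comparing the two displays.

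Next, to verify \eqref{prop:polyn1}, I apply the operator $\Mc^{x}(t,s) = x + \mv(t,s) + \Cv(t,s)\nabla_x$ to $\Gamma_0$ and substitute the gradient formula just computed:
\begin{align}
\Mc^x(t,s)\,\Gamma_0
 &= \bigl(x + \mv(t,s)\bigr)\Gamma_0 + \Cv(t,s)\,\Cv^{-1}(t,s)\,z\,\Gamma_0 \\
 &= \bigl(x + \mv(t,s) + (y - x - \mv(t,s))\bigr)\Gamma_0 \; = \; y\,\Gamma_0,
\end{align}
which is precisely \eqref{prop:polyn1}. The same recipe handles \eqref{prop:polyn2}: applying $\bar\Mc^{y}(t,s) = y - \mv(t,s) + \Cv(t,s)\nabla_y$ and using $\nabla_y\Gamma_0 = -\Cv^{-1} z\,\Gamma_0$ yields $\bigl(y - \mv(t,s) - z\bigr)\Gamma_0 = x\,\Gamma_0$.

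There is essentially no obstacle here: every step is a line of algebra once the explicit form of $\Gamma_0$ is written out. The only thing to be careful about is correctly propagating the sign in the chain rule for $\partial_{x_i}$ versus $\partial_{y_i}$, and the fact that $\Cv(t,s)$ and $\mv(t,s)$ do not depend on $x$ or $y$, so they commute freely past $\nabla_x$ and $\nabla_y$. Symmetry of the covariance matrix $\Cv(t,s)$ (inherited from the symmetry of $C(s)$ in \eqref{def_A0_bis}) is implicitly used to identify $\Cv^{-1}$ as a self-adjoint operator acting on $z$, which is what allows the gradient of $\langle \Cv^{-1} z,z\rangle$ to collapse to $2\Cv^{-1} z$.
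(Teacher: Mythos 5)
Your proof is correct, and every step checks out: writing $z = y - x - \mv(t,s)$, using symmetry of $\Cv(t,s)$ to compute $\nabla_y \Gamma_0 = -\Cv^{-1}(t,s)z\,\Gamma_0$ and $\nabla_x\Gamma_0 = +\Cv^{-1}(t,s)z\,\Gamma_0$, and then substituting back into $\Mc^x$ and $\bar\Mc^y$ does give the three identities with no gaps.

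However, you take a genuinely different route from the paper. You prove the lemma by direct differentiation of the explicit Gaussian formula \eqref{e22and} --- which is precisely the alternative the paper acknowledges and declines (``While the previous identities can be directly verified a posteriori by elementary computations, here we give an alternative `constructive' proof\ldots''). The paper instead works on the Fourier side, using the explicit characteristic function in \eqref{e22}--\eqref{e22bis} together with the standard rules $\ii\xi\Fc_x(f) = \Fc_x(-\nabla_x f)$ and $\Fc_x(xf) = -\ii\nabla_\xi\Fc_x f$ to \emph{derive} the form of the operators $\Mc^x$ and $\bar\Mc^y$ rather than merely verify them. Your approach is shorter and more self-contained for the Gaussian case at hand; the paper's Fourier approach buys generality, as stressed in Remark \ref{rand1}: the same derivation works whenever $\Gam_0$ is a function of $x-y$ with an explicitly known characteristic function (e.g.\ an additive or L\'evy-type process), in which case the $\Mc^x$-like operators become pseudo-differential rather than differential, and a direct pointwise computation of the kind you carried out would not be available.
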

\begin{proof}
While the previous identities can be directly verified  a posteriori by elementary computations,
here we give an alternative ``constructive'' proof which shows {how to find $\Mc^x$-like and
$\bar{\Mc}^y$-like operators, which are equivalent to multiplication by the backward and forward
variables $y$ and $x$ respectively, in even more general frameworks (see Remark \ref{rand1}
below).} To this end, we will require some properties of the Fourier transform
\begin{align}
\Fc_{x}f(\xi)
&:=\frac{1}{\sqrt{(2\pi)^{d}}}\int_{\mathbb{R}^{d}}\ee^{\ii x\xi}f(x) \dd x.
\end{align}
First, we recall that for any function $f$ in the Schwartz space we have
\begin{align}\label{prop:fourier}
\ii \xi \Fc_{x}(f)&=\Fc_{x}(-\nabla_{x}f) , &
 \Fc_{x}(x f)&=-\ii\nabla_{\xi}\Fc_{x}f .
\end{align}
Moreover, we have
\begin{align}\label{e22}
 \Fc_{x}\Gamma_{0}(t,\cdot;T,y)(\xi)
    &=  \frac{1}{  \sqrt{(2\pi)^{d}} }
    e^{\ii \xi(y -\mv(t,T)) -\frac{1}{2}\langle\Cv(t,T)\xi,\xi\rangle},\\ \label{e22bis}
 \Fc_{y}\Gamma_{0}(t,x;T,\cdot)(\eta)
    &=  \frac{1}{  \sqrt{(2\pi)^{d}} }
    e^{\ii \eta(x+\mv(t,T)) -\frac{1}{2}\langle\Cv(t,T)\eta,\eta\rangle}.
\end{align}
To obtain the identity \eqref{prop:deriv} we simply use that $\Gam_0(t,x;T,y)=\Gam_0(t,x-y;T,0)$.
For \eqref{prop:polyn1}, we have:
\begin{align}
 \Fc_{y}(y \Gamma_{0})&=-\ii\nabla_{\eta}\Fc_{y}(\Gamma_{0}) \\
    &= \left(x+\mv(t,s)+ \Cv(t,s)\ii \eta\right)
    \Fc_{y}(\Gamma_{0}) &
    & \text{(by \eqref{e22bis})} \\
  &=  \Fc_{y}(\left(x+\mv(t,s)- \Cv(t,s)\nabla_{y}\right)\Gamma_{0}) &
    &\text{(by \eqref{prop:fourier})}\\
  &=  \Fc_{y}(\Mc^{x}(t,s)\Gamma_{0}). &
    &\text{(by \eqref{prop:deriv})}
  \end{align}
The proof of identity \eqref{prop:polyn2} is analogous to the proof of identity \eqref{prop:polyn1}.
\end{proof}

\begin{remark}\label{rand1}
It is worth noting that the argument of the above proof applies whenever {the characteristic
function of the stochastic process with transition density $\Gam_0$ is explicitly known and when
$\Gam_0$ can be expressed as a function of $x-y$.   Thus, $\Mc^x$-like and $\bar{\Mc}^y$-like
operators can be obtained, for example, when $\Gam_0$ is the transition density of an additive
(i.e., time-dependent L\'evy) process.  In this case, the $\Mc^x$-like and $\bar{\Mc}^y$-like
operators would be pseudo-differential operator rather than (usual) differential operators.}
\end{remark}

\begin{corollary}\label{corollary_welldef}
For any $t<s$, $s_{1}\in[0,T]$ and $x,y \in\mathbb{R}^d$ we have
\begin{align}\label{prop:polin_xi}
 a_{\alpha,n}(s_{1},y) \Gamma_{0}(t,x;s,y)= &\, a_{\alpha,n}\left(s_{1},\Mc^x(t,s)\right) \Gamma_{0}(t,x;s,y) ,\\
 \label{prop:polin_x}
 a_{\alpha,n}(s_{1},x) \Gamma_{0}(t,x;s,y)= &\,
 a_{\alpha,n}\left(s_{1},\bar{\Mc}^{y}(t,s)\right) \Gamma_{0}(t,x;s,y) .
\end{align}
\end{corollary}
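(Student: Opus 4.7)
The plan is to reduce the statement to the action of monomial operators on the Gaussian kernel, and then invoke linearity together with the commutativity properties of the components of $\Mc^x$ and $\bar{\Mc}^y$.

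First, I would recall that, by the definition of the polynomial expansion, the function $a_{\alpha,n}(s_{1},\cdot)$ is a polynomial in its spatial argument. Hence it is enough, by linearity, to prove that for every multi-index $\beta\in\mathbb{N}^d_0$ one has
\begin{align}
y^{\beta}\,\Gamma_{0}(t,x;s,y)
    &=\bigl(\Mc^{x}(t,s)\bigr)^{\beta}\Gamma_{0}(t,x;s,y),
\end{align}
where $(\Mc^{x})^{\beta}=(\Mc^{x}_1)^{\beta_1}\cdots(\Mc^{x}_d)^{\beta_d}$, and analogously for $\bar{\Mc}^{y}$. The base case $|\beta|=1$ is exactly identity \eqref{prop:polyn1}, which was established in the preceding lemma; identity \eqref{prop:polyn2} provides the base case for the companion statement.

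Next, I would verify that $(\Mc^{x}(t,s))^{\beta}$ is unambiguous, i.e.\ the components $\Mc^{x}_i$ commute among themselves. A direct computation gives
\begin{align}
[\Mc^{x}_i,\Mc^{x}_k]
    &=\Bigl[x_i+\sum_j \Cv_{ij}(t,s)\p_{x_j},\;x_k+\sum_l \Cv_{kl}(t,s)\p_{x_l}\Bigr]
    =\Cv_{ik}(t,s)-\Cv_{ki}(t,s)=0,
\end{align}
since $\Cv(t,s)$ is symmetric. The same applies to $\bar{\Mc}^{y}$.

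Then I would run the induction on $|\beta|$. Suppose the claim holds for some $\beta$ and fix an index $j$; then
\begin{align}
y_j y^{\beta}\Gamma_{0}
    &= y_j\bigl(\Mc^{x}\bigr)^{\beta}\Gamma_{0}
     = \bigl(\Mc^{x}\bigr)^{\beta}\bigl(y_j\Gamma_{0}\bigr)
     = \bigl(\Mc^{x}\bigr)^{\beta}\Mc^{x}_j\Gamma_{0}
     = \bigl(\Mc^{x}\bigr)^{\beta+e_j}\Gamma_{0},
\end{align}
where the second equality uses that $\Mc^{x}$ acts only on the $x$-variable and therefore trivially commutes with multiplication by $y_j$, the third equality is the base case applied to $\Gam_0$, and the last uses the commutativity of the $\Mc^{x}_i$'s proved above. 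Identity \eqref{prop:polin_xi} then follows by writing $a_{\alpha,n}(s_{1},y)=\sum_{\beta}c_{\beta}(s_{1})y^{\beta}$ (a finite sum) and applying linearity. The argument for \eqref{prop:polin_x} is entirely symmetric, starting from \eqref{prop:polyn2} and using that $\bar{\Mc}^{y}$ acts only on $y$ and so commutes with multiplication by $x_j$.

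The only point that requires care is the commutativity issue, which is why I would spell out explicitly that the components of $\Mc^{x}(t,s)$ mutually commute thanks to the symmetry of $\Cv(t,s)$; this is what makes $\bigl(\Mc^{x}(t,s)\bigr)^{\beta}$ well defined and independent of the order in which the factors are applied, and hence makes the notation $a_{\alpha,n}\bigl(s_{1},\Mc^{x}(t,s)\bigr)$ unambiguous.
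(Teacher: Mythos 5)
Your proof is correct, and it takes a genuinely different route from the paper for the key technical point, namely the well-definedness of $a_{\alpha,n}\bigl(s_1,\Mc^x(t,s)\bigr)$. The paper shows that the components $\Mc^x_i$ commute \emph{only when applied to $\Gamma_0$ and its derivatives}, using the $\Gamma_0$-specific identities \eqref{prop:deriv} and \eqref{prop:polyn1} together with a back-and-forth between $x$- and $y$-derivatives; it even adds a parenthetical claiming commutativity fails on generic functions. You instead compute the commutator directly and find $[\Mc^x_i,\Mc^x_k]=\Cv_{ik}(t,s)-\Cv_{ki}(t,s)=0$ as operators, by the symmetry of the covariance matrix. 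This is a stronger statement (commutativity on all smooth functions, not just on $\Gamma_0$ and its derivatives) and, incidentally, shows that the paper's parenthetical remark is mistaken in the present Gaussian setting. You then make explicit the induction on $|\beta|$ reducing to the monomial case, a step the paper compresses into ``clearly \eqref{prop:polin_xi} is now a straightforward consequence of \eqref{prop:polyn1}.'' The trade-off: your direct commutator argument is shorter and more elementary here, while the paper's indirect argument (relying only on the identities $D^\beta_x\Gamma_0=(-1)^{|\beta|}D^\beta_y\Gamma_0$ and $\Mc^x_i\Gamma_0=y_i\Gamma_0$) is tailored to generalize to the pseudo-differential setting of Remark \ref{rand1}, where an explicit commutator computation may not be available.
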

\begin{proof} First we note that the components $\Mc_i^{x}(t,s)$, $i=1,\dots,d$,
of the operator $\Mc^{x}(t,s)$ commute when applied to $\Gamma_{0}=\Gamma_{0}(t,x;s,y)$ and to its
derivatives (notice however that this is not true in general when they are applied to a generic
function).  Indeed, for any multi-index $\b$, we have
\begin{align}
  \Mc_i^{x}(t,s)\Mc_j^{x}(t,s)D^{\b}_{x}\Gamma_{0}&=(-1)^{|\b|}\Mc_i^{x}(t,s)\Mc_j^{x}(t,s)D^{\b}_{y}\Gamma_{0}
  &  &\text{(by \eqref{prop:deriv})}\\
  &=(-1)^{|\b|}D^{\b}_{y}\Mc_i^{x}(t,s)\Mc_j^{x}(t,s)\Gamma_{0}\\
  &=(-1)^{|\b|}D^{\b}_{y}\Mc_i^{x}(t,s)y_{j}\Gamma_{0} &  &\text{(by \eqref{prop:polyn1})}\\
  &=(-1)^{|\b|}D^{\b}_{y}y_{j}\Mc_i^{x}(t,s)\Gamma_{0}\\
  &=(-1)^{|\b|}D^{\b}_{y}y_{j}y_{i}\Gamma_{0} \\
  &=\Mc_j^{x}(t,s)\Mc_i^{x}(t,s)D^{\b}_{x}\Gamma_{0}. & &\text{{(by reversing the steps above)}}
\end{align}
Since $a_{\alpha,n}(s_{1},\cdot)$ is a polynomial by construction, we therefore have that the
operators $a_{\alpha,n}\left(s_{1},\Mc^x(t,s)\right)$
are defined unambiguously when applied to $\Gamma_{0}(t,x;s,y)$ and to its derivatives. Moreover,
clearly \eqref{prop:polin_xi} is now a straightforward consequence of \eqref{prop:polyn1}. An
analogous argument shows the validity of \eqref{prop:polin_x}.
\end{proof}

We now recall the operators
\begin{align}
  \Ac^x_n(s)
        &= \sum_{|\alpha |\leq 2}  a_{\alpha,n}(s,x) D_x^{\alpha}, &
\Gc^x_{n}(t,s)
        &= \sum_{|\alpha| \leq 2} a_{\alpha,n}\big(s,\Mc^x(t,s)\big) D_x^{\alpha}, &
n
        &\geq 0 , \label{eq:G.def}
\end{align}
as they are defined in \eqref{eq:A.expand} and \eqref{def_Gn}, and we introduce the operator
\begin{align}
{\bar{\Gc}^y_{n}(t,s)}
    &= \sum_{|\alpha| \leq 2}(-1)^{|\alpha |} D_y^{\alpha} a_{\alpha,n}\big(t,{\bar{\Mc}^y(t,s)}\big), &
n
    &\geq 0, \label{eq:Gbar.def}
\end{align}
with ${\bar{\Mc}^y}$ as in \eqref{def_Mbar}. We remark explicitly that, by Corollary
\ref{corollary_welldef}, operators $\Gc^x_{n}(t,s)$ and ${\bar{\Gc}^y_{n}(t,s)}$ are defined
unambiguously when applied to $\Gamma_{0}=\Gamma_{0}(t,x;s,y)$, to its derivatives and, more
generally, {by the representation formula \eqref{e10}}, to solutions of the Cauchy problem
\eqref{eq:v.0.pide}.

The next proposition and its remarkable corollaries are the key of the proof of Theorem
\ref{th:un_general_repres}.
\begin{proposition}
For any $t<s<T$, $x,y\in\mathbb{R}^d$ and $n\geq 1$, we have
\begin{align}\label{eq:fund_prop}
\int_{\mathbb{R}^{d}} \Gamma_{0}(t,x;s,\xi) \Ac^{\xi}_n(s) f(\xi)  \dd \xi
&=\Gc^{x}_{n}(t,s)\int_{\mathbb{R}^{d}} \Gamma_{0}(t,x;s,\xi) f(\xi)  \dd \xi ,\\
\label{eq:fund_prop1} \int_{\mathbb{R}^{d}} f(\xi) \Ac^{\xi}_n(s) \Gamma_{0}(s,\xi;T,y) \dd \xi
&=\bar{\Gc}^{y}_{n}(s,T)\int_{\mathbb{R}^{d}} f(\xi) \Gamma_{0}(s,\xi;T,y)  \dd \xi ,\end{align}
for any $f\in C^2_{0}\left( \mathbb{R}^d \right)$. Furthermore, the following relation holds:
\begin{align}\label{eq:fund_prop2}
\Gc^{x}_{n}(t,s)\Gamma_{0}(t,x;T,y)=\bar{\Gc}^{y}_{n}(s,T)\Gamma_{0}(t,x;T,y).
\end{align}
\end{proposition}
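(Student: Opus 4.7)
The plan is to prove the three identities in sequence: the first two are obtained by pushing the coefficient $a_{\alpha,n}$ and the derivatives $D_{\xi}^{\alpha}$ off the Gaussian kernel using Corollary \ref{corollary_welldef} and identity \eqref{prop:deriv} (applied in the appropriate pair of variables), and the third follows by computing a single integral in two different ways using the Chapman--Kolmogorov identity for $\Gamma_{0}$ itself.

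For \eqref{eq:fund_prop}, I would expand $\Ac^{\xi}_{n}(s)=\sum_{|\alpha|\le 2}a_{\alpha,n}(s,\xi)D_{\xi}^{\alpha}$ under the integral. For each $\alpha$, use \eqref{prop:polin_xi} to rewrite $a_{\alpha,n}(s,\xi)\Gamma_{0}(t,x;s,\xi)$ as $a_{\alpha,n}(s,\Mc^{x}(t,s))\Gamma_{0}(t,x;s,\xi)$; since $D_{\xi}^{\alpha}f(\xi)$ does not depend on $x$, the $x$-differential operator $a_{\alpha,n}(s,\Mc^{x}(t,s))$ commutes with multiplication by $D_{\xi}^{\alpha}f(\xi)$ and may be pulled outside the integral. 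An integration by parts in $\xi$, legitimate because $f\in C^{2}_{0}(\R^{d})$, moves $D_{\xi}^{\alpha}$ onto $\Gamma_{0}$ with a factor $(-1)^{|\alpha|}$; then the $(t,x;s,\xi)$-analogue of \eqref{prop:deriv}, namely $D_{\xi}^{\alpha}\Gamma_{0}(t,x;s,\xi)=(-1)^{|\alpha|}D_{x}^{\alpha}\Gamma_{0}(t,x;s,\xi)$, cancels this sign and lets $D_{x}^{\alpha}$ come outside the integral. Summing over $\alpha$ reassembles $\Gc^{x}_{n}(t,s)$ as defined in \eqref{eq:G.def} acting on $\int\Gamma_{0}f\,\dd\xi$.

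For \eqref{eq:fund_prop1} no integration by parts is required because $D_{\xi}^{\alpha}$ already stands directly on $\Gamma_{0}(s,\xi;T,y)$. The analogue of \eqref{prop:deriv} in the pair $(\xi,y)$ converts $D_{\xi}^{\alpha}\Gamma_{0}(s,\xi;T,y)$ into $(-1)^{|\alpha|}D_{y}^{\alpha}\Gamma_{0}(s,\xi;T,y)$, and \eqref{prop:polin_x} replaces the factor $a_{\alpha,n}(s,\xi)$ multiplying $\Gamma_{0}(s,\xi;T,y)$ by $a_{\alpha,n}(s,\bar{\Mc}^{y}(s,T))$. Since neither $D_{y}^{\alpha}$ nor the operator $a_{\alpha,n}(s,\bar{\Mc}^{y}(s,T))$ depends on the integration variable $\xi$, both can be pulled outside the integral, and summing over $\alpha$ reproduces the ordering $(-1)^{|\alpha|}D_{y}^{\alpha}a_{\alpha,n}(s,\bar{\Mc}^{y}(s,T))$ that defines $\bar{\Gc}^{y}_{n}(s,T)$ in \eqref{eq:Gbar.def}.

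Finally, \eqref{eq:fund_prop2} is obtained by evaluating
\begin{equation*}
I:=\int_{\R^{d}}\Gamma_{0}(t,x;s,\xi)\,\Ac^{\xi}_{n}(s)\Gamma_{0}(s,\xi;T,y)\,\dd\xi
\end{equation*}
in two complementary ways. Choosing $f(\xi)=\Gamma_{0}(s,\xi;T,y)$ in \eqref{eq:fund_prop} and then invoking the Chapman--Kolmogorov identity for $\Gamma_{0}$ gives $I=\Gc^{x}_{n}(t,s)\Gamma_{0}(t,x;T,y)$; choosing $f(\xi)=\Gamma_{0}(t,x;s,\xi)$ in \eqref{eq:fund_prop1} gives $I=\bar{\Gc}^{y}_{n}(s,T)\Gamma_{0}(t,x;T,y)$; equating the two expressions yields \eqref{eq:fund_prop2}. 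The main technical nuisance, rather than any deep obstacle, is that \eqref{eq:fund_prop} and \eqref{eq:fund_prop1} are stated for $f\in C^{2}_{0}(\R^{d})$ while the two Gaussian choices of $f$ are not compactly supported; this is handled by a standard truncation/dominated-convergence argument, relying on the Schwartz-in-space decay of $\Gamma_{0}$ and the polynomial character of the $a_{\alpha,n}(s,\cdot)$, which make every integrand appearing in the derivation integrable together with all of its $x$- and $y$-derivatives.
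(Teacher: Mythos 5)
Your proof follows essentially the same route as the paper's: for \eqref{eq:fund_prop} and \eqref{eq:fund_prop1} you push the polynomial coefficient off the Gaussian via Corollary \ref{corollary_welldef}, swap derivative variables via \eqref{prop:deriv}, and integrate by parts where needed, and for \eqref{eq:fund_prop2} you evaluate $\int_{\R^d}\Gamma_0(t,x;s,\xi)\,\Ac^{\xi}_n(s)\Gamma_0(s,\xi;T,y)\,\dd\xi$ in two ways and close with Chapman--Kolmogorov. Your explicit remark that the Gaussian choices of $f$ used in the last step lie outside $C_0^2$, together with the truncation/dominated-convergence fix, is a small but genuine rigorization of a point the paper passes over silently.
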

\begin{proof}
We first prove \eqref{eq:fund_prop}. By the definition of $\Ac^{\xi}_n$ we have
\begin{align}
&\int_{\mathbb{R}^d} \Gamma_{0}(t,x;s,\xi)\, \Ac^{\xi}_n(s) f(\xi)  \dd \xi = \sum_{|\alpha| \leq
2} \int_{\mathbb{R}^d} a_{\alpha,n}(s,\xi)  \Gamma_{0}(t,x;s,\xi) \,D_{\xi}^{\alpha} f(\xi) \dd
\xi\\
&= \sum_{|\alpha| \leq 2}  a_{\alpha,n}\big(s,\Mc^x(t,s)\big) \int_{\mathbb{R}^d}
\Gamma_{0}(t,x,y;s,\xi,\omega) \,D_{\xi}^{\alpha} f(\xi)  \dd \xi & &\text{(by
\eqref{prop:polin_xi})}
\\ &= \sum_{|\alpha| \leq 2}  a_{\alpha,n}\big(s,\Mc^x(t,s)\big) (-1)^{|\alpha |}
\int_{\mathbb{R}^d} D_{\xi}^{\alpha} \Gamma_{0}(t,x;s,\xi) \, f(\xi)  \dd \xi &
&\text{(integrating by parts)} \\ &= \sum_{|\alpha| \leq 2}  a_{\alpha,n}\big(s,\Mc^x(t,s)\big)
D_{x}^{\alpha} \int_{\mathbb{R}^d}  \Gamma_{0}(t,x;s,\xi) \, f(\xi)  \dd \xi & &\text{(by
\eqref{prop:deriv})} \\ &= \Gc^{x}_{n}(t,s) \int_{\mathbb{R}^d}  \Gamma_{0}(t,x;s,\xi) \, f(\xi)
\dd \xi . & &\text{{(by \eqref{eq:G.def}, definition of $\Gc^{x}_{n}$)}}
\end{align}
Similarly, for \eqref{eq:fund_prop1}, {using the definition of $\Ac^{\xi}_n$ we have}
\begin{align}
&\int_{\mathbb{R}^d} f(\xi)\, \Ac^{\xi}_n(s) \Gamma_{0}(s,\xi;T,y)  \dd \xi  = \sum_{|\alpha| \leq
2} \int_{\mathbb{R}^d} f(\xi) \, a_{\alpha,n}(s,\xi) \, D_{\xi}^{\alpha} \Gamma_{0}(s,\xi;T,y) \dd
\xi
\\
&= \sum_{|\alpha| \leq 2} (-1)^{|\alpha |} D_{y}^{\alpha} \int_{\mathbb{R}^d} f(\xi) \,
a_{\alpha,n}(s,\xi) \, \Gamma_{0}(s,\xi;T,y)  \dd \xi & &\text{(by \eqref{prop:deriv})} \\ & =
\sum_{|\alpha| \leq 2} (-1)^{|\alpha |} D_{y}^{\alpha}\,
a_{\alpha,n}\big(s,\bar{\Mc}^{y}(s,T)\big) \int_{\mathbb{R}^d} f(\xi) \, \Gamma_{0}(s,\xi;T,y) \dd
\xi & &\text{(by \eqref{prop:polin_x})} \\ &=\bar{\Gc}^{y}_{n}(s,T) \int_{\mathbb{R}^d} f(\xi) \,
\Gamma_{0}(s,\xi;T,y)  \dd \xi . & &\text{{(by \eqref{eq:Gbar.def}, definition of
$\bar{\Gc}^{y}_{n}$)}}
\end{align}
Identity \eqref{eq:fund_prop2} follows from \eqref{eq:fund_prop} and \eqref{eq:fund_prop1}.
Indeed, using the Chapman-Kolmogorov equation we have
\begin{align}
&\Gc^{x}_{n}(t,s)\Gamma_{0}(t,x;T,y) =  \Gc^{x}_{n}(t,s) \int_{\mathbb{R}^d} \Gamma_{0}(t,x;s,\xi)
\, \Gamma_{0}(s,\xi;T,y)  \dd \xi
\\
& = \int_{\mathbb{R}^d} \Gamma_{0}(t,x;s,\xi) \, \Ac^{\xi}_n(s) \Gamma_{0}(s,\xi;T,y)  \dd \xi &
&\hspace{-2em}\text{(applying \eqref{eq:fund_prop} with {$f(\xi)=\Gamma_{0}(s,\xi;T,y)$})} \\
&=\bar{\Gc}^{y}_{n}(s,T) \int_{\mathbb{R}^d} \Gamma_{0}(t,x;s,\xi) \, \Gamma_{0}(s,\xi;T,y) \dd
\xi  & &\hspace{-2em}\text{(applying \eqref{eq:fund_prop1} with {$f(\xi)=\Gamma_{0}(t,x;s,\xi)$})} \\ &=
\bar{\Gc}^{y}_{n}(s,T) \Gamma_{0}(t,x;T,y). & &\hspace{-2em}\text{{(by Chapman-Kolmogorov)}}
\end{align}
\end{proof}
\begin{corollary}\label{fund_corol}
For any $t<s<T$, $x,y\in\mathbb{R}$, $n\geq 1$, we have
\begin{align}\label{fund_coroll_1}
\int_{\mathbb{R}^d}  \Gamma_{0}(t,x;s,\xi)\,  \Gc^{\xi}_{i_1}(s,s_1)\cdots \Gc^{\xi}_{i_n}(s,s_n)
\Gamma_{0}(s,\xi;T,y)\dd \xi  =&\, \Gc^{x}_{i_1}(t,s_1)\cdots \Gc^{x}_{i_n}(t,s_n)
\Gamma_{0}(t,x;T,y),
\end{align}
for any $i\in\mathbb{N}^n$ and $s<s_1<\cdots <s_n<T$.
\end{corollary}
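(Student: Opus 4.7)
The plan is to reduce the corollary to repeated applications of the key duality identity \eqref{eq:fund_prop2} together with the Chapman--Kolmogorov identity. The only non-trivial observation needed beyond these is that the operators $\Gc^\xi_{n}(s,s')$ (which involve only $\xi$ and $\nabla_\xi$) commute with the operators $\bar\Gc^y_{m}(s'',T)$ (which involve only $y$ and $\nabla_y$), and likewise $\Gc^x_{n}(t,s')$ commutes with $\bar\Gc^y_{m}(s'',T)$. This is clear from the definitions \eqref{def_Gn} and \eqref{eq:Gbar.def} since ordinary partial derivatives in distinct variables commute and multiplication by functions in different variables commutes.

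The first step is an auxiliary claim, proved by induction on $n$:
\begin{align}
 \Gc^{\xi}_{i_1}(s,s_1) \cdots \Gc^{\xi}_{i_n}(s,s_n) \, \Gamma_{0}(s,\xi;T,y)
   = \bar\Gc^{y}_{i_n}(s_n,T) \cdots \bar\Gc^{y}_{i_1}(s_1,T) \, \Gamma_{0}(s,\xi;T,y) .
\end{align}
The base case $n=1$ is precisely \eqref{eq:fund_prop2} (applied with $t\leftarrow s$, $s\leftarrow s_1$, $x\leftarrow\xi$). For the inductive step, apply the outermost operator $\Gc^{\xi}_{i_1}(s,s_1)$ to the induction hypothesis, commute it past each $\bar\Gc^{y}_{i_k}(s_k,T)$ (legal, since these act in $y$), then convert $\Gc^{\xi}_{i_1}(s,s_1)\Gamma_0(s,\xi;T,y)$ into $\bar\Gc^{y}_{i_1}(s_1,T)\Gamma_0(s,\xi;T,y)$ via \eqref{eq:fund_prop2} once more.

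Next, plug this identity into the integrand of \eqref{fund_coroll_1}. Since every $\bar\Gc^{y}_{i_k}(s_k,T)$ acts only in $y$, it can be pulled outside the $\dd\xi$-integral; then Lemma \ref{lem:semigroup_property} (Chapman--Kolmogorov) gives
\begin{align}
 \int_{\mathbb{R}^d} \Gamma_{0}(t,x;s,\xi)\,\Gamma_{0}(s,\xi;T,y)\,\dd\xi
   = \Gamma_{0}(t,x;T,y),
\end{align}
so the left-hand side of \eqref{fund_coroll_1} equals $\bar\Gc^{y}_{i_n}(s_n,T) \cdots \bar\Gc^{y}_{i_1}(s_1,T)\,\Gamma_{0}(t,x;T,y)$.

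The final step is a second induction on $n$, entirely symmetric to the first, to reverse the translation: using \eqref{eq:fund_prop2} on the innermost factor $\bar\Gc^{y}_{i_1}(s_1,T)\Gamma_0(t,x;T,y) = \Gc^{x}_{i_1}(t,s_1)\Gamma_0(t,x;T,y)$ and then commuting the resulting $\Gc^{x}_{i_1}(t,s_1)$ (which acts in $x$) past all remaining $\bar\Gc^{y}_{i_k}(s_k,T)$ operators to the left, one converts, layer by layer, each $\bar\Gc^{y}_{i_k}$ into $\Gc^{x}_{i_k}$, ending with $\Gc^{x}_{i_1}(t,s_1)\cdots \Gc^{x}_{i_n}(t,s_n)\Gamma_{0}(t,x;T,y)$, as desired. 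The only delicate point is the bookkeeping of operator orderings during the commutations; the ordering $s<s_1<\cdots<s_n<T$ together with Corollary \ref{corollary_welldef} guarantees that every pseudo-differential operator appearing is well-defined when applied to $\Gamma_0$ and its derivatives, so all the manipulations are rigorous.
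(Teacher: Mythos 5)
Your proof is correct and uses the same essential ingredients as the paper's proof: the duality identity \eqref{eq:fund_prop2}, the commutation of operators acting in disjoint variables, and the Chapman--Kolmogorov identity. The only difference is organizational — the paper proves \eqref{fund_coroll_1} by a single induction that peels off the innermost $\Gc^\xi$ operator, converts it to a $\bar\Gc^y$ operator outside the integral, applies the inductive hypothesis, and converts back, whereas you separate the work into two auxiliary inductions (one transporting all $\Gc^\xi$ operators to $\bar\Gc^y$ operators, one transporting $\bar\Gc^y$ back to $\Gc^x$) with a single Chapman--Kolmogorov integration in between; this is a slightly more modular restructuring of exactly the same argument.
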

\begin{proof}
We first prove \eqref{fund_coroll_1}. By induction on $n$. For $n=1$, and for any $i_1\geq 1$,
$t<s_1<T$, we have
\begin{align}
&\int_{\mathbb{R}^d}  \Gamma_{0}(t,x;s,\xi)\,  \Gc^{\xi}_{i_1}(s,s_1) \Gamma_{0}(s,\xi;T,y)\dd \xi
\\ &=\, \bar{\Gc}^{y}_{i_1}(s_1,T) \int_{\mathbb{R}^d}  \Gamma_{0}(t,x;s,\xi)\, \Gamma_{0}(s,\xi;T,y)\dd
\xi & &\text{(by \eqref{eq:fund_prop2})} \\ &=\, \bar{\Gc}^{y}_{i_1}(s_1,T) \Gamma_{0}(t,x;T,y) &
&\text{(by Chapman-Kolmogorov)} \\ &=\, \Gc^{x}_{i_1}(t,s_1) \Gamma_{0}(t,x;T,y). & &\text{(by
\eqref{eq:fund_prop2})}
\end{align}
We assume now the thesis to be true for $n\geq 1$ and for any $i\in\mathbb{N}^n$, $s<s_1,\cdots
s_n<T$. Then, for any $i_{n+1}\geq 1$, $s_n<s_{n+1}<T$ we have
\begin{align}
&\int_{\mathbb{R}^d}  \Gamma_{0}(t,x;s,\xi)\,  \Gc^{\xi}_{i_1}(s,s_1)\cdots \Gc^{\xi}_{i_n}(s,s_n)
\Gc^{\xi}_{i_{n+1}}(s,s_{n+1}) \Gamma_{0}(s,\xi;T,y)\dd \xi  \\ &=
\bar{\Gc}^{y}_{i_{n+1}}(s_{n+1},T) \int_{\mathbb{R}^d}  \Gamma_{0}(t,x;s,\xi)\,
\Gc^{\xi}_{i_1}(s,s_1)\cdots \Gc^{\xi}_{i_n}(s,s_n) \Gamma_{0}(s,\xi;T,y)\dd \xi  &
&\text{\big(\eqref{eq:fund_prop2} on $\Gc^{\xi}_{i_{n+1}}(s,s_{n+1}) \Gamma_{0}$\big)} \\ &=
\bar{\Gc}^{y}_{i_{n+1}}(s_{n+1},T)\, \Gc^{x}_{i_1}(t,s_1)\cdots \Gc^{x}_{i_n}(t,s_n)
\Gamma_{0}(t,x;T,y) & &\text{{(inductive hypothesis)}}\\ &= \Gc^{x}_{i_1}(t,s_1)\cdots
\Gc^{x}_{i_n}(t,s_n)\, \bar{\Gc}^{y}_{i_{n+1}}(s_{n+1},T) \Gamma_{0}(t,x;T,y)  \\ &=
\Gc^{x}_{i_1}(t,s_1)\cdots \Gc^{x}_{i_n}(t,s_n)\, \Gc^{x}_{i_{n+1}}(t,s_{n+1})
\Gamma_{0}(t,x;T,y), & &\text{(\eqref{eq:fund_prop2} on $\bar{\Gc}^{y}_{i_{n+1}}(s_{n+1},T)
\Gamma_{0}$)}
\end{align}
which proves \eqref{fund_coroll_1}.
\end{proof}
\noindent From here to the end of this {section}, we set $\gam=0$.  We do this merely to save
space. The general case, with $\gam \neq 0$, is completely analogous and introduces no
complications.
\begin{corollary}\label{fund_corol2}
Let $u_{0}$ be as in \eqref{e10} with $\gamma=0$. For any $t<s<T$, $x,y\in\mathbb{R}$, $n\geq 1$,
we have
\begin{align}\label{fund_coroll_2}
 \int_{\mathbb{R}^d}  \Gamma_{0}(t,x;s,\xi)\,  \Gc^{\xi}_{i_1}(s,s_1)\cdots
 \Gc^{\xi}_{i_n}(s,s_n)  u_0(s,\xi)\dd \xi  =&\, \Gc^{x}_{i_1}(t,s_1)\cdots \Gc^{x}_{i_n}(t,s_n) u_0(t,x),
\end{align}
for any $i\in\mathbb{N}^n$ and $s<s_1<\cdots <s_n<T$.
\end{corollary}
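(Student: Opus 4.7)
The strategy is to reduce the statement to the already-established identity \eqref{fund_coroll_1} of Corollary \ref{fund_corol} by representing $u_0$ as an integral against $\Gamma_0$ and swapping the order of integration. Recall that, with $\gamma=0$, the representation formula \eqref{e10} reads
\begin{align}
u_0(s,\xi) = \int_{\mathbb{R}^d} \Gamma_0(s,\xi;T,y)\, \phi(y) \, \dd y .
\end{align}
Substituting this expression into the left-hand side of \eqref{fund_coroll_2} yields
\begin{align}
\int_{\mathbb{R}^d} \Gamma_0(t,x;s,\xi)\, \Gc^{\xi}_{i_1}(s,s_1)\cdots \Gc^{\xi}_{i_n}(s,s_n) \left[ \int_{\mathbb{R}^d} \Gamma_0(s,\xi;T,y) \phi(y)\, \dd y \right] \dd \xi .
\end{align}

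The plan is then to interchange the action of the differential operators $\Gc^{\xi}_{i_k}(s,s_k)$ (which act only in $\xi$) with the integral in $y$, and subsequently to apply Fubini's theorem to exchange the order of the $\xi$-- and $y$--integrals. After these two exchanges the inner $\xi$--integral becomes precisely the left-hand side of \eqref{fund_coroll_1}, so Corollary \ref{fund_corol} gives
\begin{align}
\int_{\mathbb{R}^d} \phi(y) \left[ \Gc^{x}_{i_1}(t,s_1)\cdots \Gc^{x}_{i_n}(t,s_n) \Gamma_0(t,x;T,y) \right] \dd y .
\end{align}
Finally, since the operators $\Gc^{x}_{i_k}(t,s_k)$ act only on the variable $x$ and not on $y$, they can be pulled outside the $y$--integral, yielding
\begin{align}
\Gc^{x}_{i_1}(t,s_1)\cdots \Gc^{x}_{i_n}(t,s_n) \int_{\mathbb{R}^d} \Gamma_0(t,x;T,y) \phi(y)\, \dd y = \Gc^{x}_{i_1}(t,s_1)\cdots \Gc^{x}_{i_n}(t,s_n) u_0(t,x),
\end{align}
which is exactly \eqref{fund_coroll_2}.

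The main obstacle is justifying the two interchanges of limit operations (differentiation under the integral sign in $y$, and Fubini). Each $\Gc^{\xi}_{i_k}(s,s_k)$, by definition \eqref{def_Gn}, is a second-order differential operator in $\xi$ whose coefficients are polynomials in $\xi$ through the occurrence of $\Mc^\xi$ (via Corollary \ref{corollary_welldef}); applied to the Gaussian $\Gamma_0(s,\xi;T,y)$, this produces a polynomial in $(\xi,y)$ times a Gaussian, which together with the bounded datum $\phi$ yields a uniformly integrable family and decays exponentially, so differentiation under the integral sign and Fubini both apply. The remaining calculation is then a direct invocation of \eqref{fund_coroll_1}.
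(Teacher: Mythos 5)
Your proof is correct and follows essentially the same route as the paper: substitute the representation \eqref{e10} for $u_0(s,\xi)$, differentiate under the $y$--integral and apply Fubini, invoke Corollary \ref{fund_corol}, then pull the $x$--operators outside and recognize $u_0(t,x)$ via \eqref{e10}. Your added remarks on why the interchanges are justified (polynomial-times-Gaussian integrands with bounded $\phi$) are a reasonable supplement to what the paper leaves implicit.
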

\begin{proof}
By \eqref{e10} we have
\begin{align}
&\int_{\mathbb{R}^d} \Gamma_{0}(t,x;s,\xi)\, \Gc^{\xi}_{i_{1}}(s,s_1) \cdots
\Gc^{\xi}_{i_{n}}(s,s_n)u_0(s,\xi) \dd\xi \\ &=  \int_{\mathbb{R}^d} \Gamma_{0}(t,x;s,\xi)\,
\Gc^{\xi}_{i_{1}}(s,s_1) \cdots \Gc^{\xi}_{i_{n}}(s,s_n) \,
        \int_{\mathbb{R}^d}  \Gamma_{0}(s,\xi;T,y)\, \phi(y) \dd y \, { \dd\xi  } \\
&=
\int_{\mathbb{R}^d} \phi(y)  \int_{\mathbb{R}^d} \Gamma_{0}(t,x;s,\xi) \, \Gc^{\xi}_{i_{1}}(s,s_1)
\cdots \Gc^{\xi}_{i_{n}}(s,s_n) \Gamma_{0}(s,\xi;T,y)
\dd\xi \, {\dd y} & &\text{(Fubini's theorem)} \\ &=
\int_{\mathbb{R}^d} \phi(y)\,  \Gc^{x}_{i_{1}}(t,s_1) \cdots \Gc^{x}_{i_{n}}(t,s_n)
\Gamma_{0}(t,x;T,y) \dd y & &\text{(by Corollary \ref{fund_corol})} \\ &= \Gc^{x}_{i_{1}}(t,s_1)
\cdots \Gc^{x}_{i_{n}}(t,s_n) u_0(t,x), & &\text{(by \eqref{e10})}
\end{align}
which concludes the proof.
\end{proof}

We are now in position to prove Theorem \ref{th:un_general_repres}.
Proceeding by induction on $n$, we first prove the case $n=1$. By definition, $u_1$ is the unique
solution of the non-homogeneous Cauchy problem \eqref{eq:v.n.pide} with $n=1$. Thus, by
Duhamel's principle we have
\begin{align}
&u_1(t,x)=\int_t^T \int_{\mathbb{R}^d} \Gamma_{0}(t,x;s,\xi)\, \Ac_1^{\xi}(s) u_0(s,\xi) \dd\xi
\dd s
\\ &=\int_t^T \Gc^{x}_{1}(t,s) \int_{\mathbb{R}^d} \Gamma_{0}(t,x;s,\xi)\, u_0(s,\xi) \dd \xi \,
{\dd s }& &\text{(by \eqref{eq:fund_prop} with $n=1$)} \\ &=\int_t^T \Gc^{x}_{1}(t,s)
\int_{\mathbb{R}^d} \Gamma_{0}(t,x;s,\xi) \int_{\mathbb{R}^d} \Gamma_{0}(s,\xi;T,y) \phi(y) \dd y
\, {\dd \xi \, \dd s} & &\text{(by \eqref{e10})} \\ &=  \int_t^T \Gc^{x}_{1}(t,s)
\int_{\mathbb{R}^d} \phi(y) \int_{\mathbb{R}^d} \Gamma_{0}(t,x;s,\xi)\, \Gamma_{0}(s,\xi;T,y) \dd
\xi \, { \dd y \, \dd s } & &\text{(Fubini's theorem)} \\ &=\int_t^T \Gc^{x}_{1}(t,s) \dd s\,
u_0(t,x) & &\text{(Chapman-Kolmogorov and \eqref{e10})}\\ &=\Lc^{x}_1(t,T) u_0(t,x). & &\text{(by
\eqref{eq:def_Ln}-\eqref{eq:def_Ln_bis}) }
\end{align}
For the general case, let us assume that \eqref{eq:un} holds for $n\geq 1$, and prove it holds for
$n+1$. By definition, $u_{n+1}$ is the unique solution of the non-homogeneous Cauchy problem
\eqref{eq:v.n.pide}. Thus, by Duhamel's principle, we have
\begin{align}
&u_{n+1}(t,x)=\int_t^T \int_{\mathbb{R}^d} \Gamma_{0}(t,x;s,\xi) \sum^{n+1}_{h=1} \Ac_h^{\xi}(s)
u_{n+1-h}(s,\xi) \dd\xi \dd s \\ &=  \sum^{n+1}_{h=1} \int_t^T \Gc^{x}_{h}(t,s)
\int_{\mathbb{R}^d} \Gamma_{0}(t,x;s,\xi)\, u_{n+1-h}(s,\xi) \dd\xi \, { \dd s} & &\text{(by
\eqref{eq:fund_prop} with $n=h$)} \\ &=  \sum^{n+1}_{h=1} \int_t^T
\Gc^{x}_{h}(t,s)\int_{\mathbb{R}^d} \Gamma_{0}(t,x;s,\xi)\, \Lc^{\xi}_{n+1-h}(s,T) u_0(s,\xi)
\dd\xi \, { \dd s}.\label{eq:u_n+1} & &\text{(by induction hypothesis)}
\end{align}
Now, by definition \eqref{eq:def_Ln}-\eqref{eq:def_Ln_bis} we have
\begin{align}
&\int_{\mathbb{R}^d} \Gamma_{0}(t,x;s,\xi)\, \Lc^{\xi}_{n+1-h}(s,T) u_0(s,\xi) \dd\xi  \\
&=\sum_{j=1}^{n+1-h}  \int_{\mathbb{R}^d} \Gamma_{0}(t,x;s,\xi)  \int_s^T \dd s_1 \cdots
\int_{s_{j-1}}^T \dd s_j \sum_{i\in I_{n+1-h,j}}\Gc^{\xi}_{i_{1}}(s,s_1) \cdots
\Gc^{\xi}_{i_{j}}(s,s_j) u_0(s,\xi) \, { \dd\xi }\\ &=\sum_{j=1}^{n+1-h}  \int_s^T \dd s_1
\cdots \int_{s_{j-1}}^T \dd s_j \sum_{i\in I_{n+1-h,j}} \int_{\mathbb{R}^d}
\Gamma_{0}(t,x;s,\xi)\, \Gc^{\xi}_{i_{1}}(s,s_1) \cdots \Gc^{\xi}_{i_{j}}(s,s_j) u_0(s,\xi) \dd\xi
& &\text{(Fubini's theorem)} \\[-2em] \label{eq:sum_G} &=\sum_{j=1}^{n+1-h} \int_s^T \dd s_1
\cdots \int_{s_{j-1}}^T \dd s_j  \sum_{i\in I_{n+1-h,j}}  \Gc^{x}_{i_{1}}(t,s_1) \cdots
\Gc^{x}_{i_{j}}(t,s_j) u_0(t,x). & &\text{(by \eqref{fund_coroll_2})}
\end{align}
Next, by inserting \eqref{eq:sum_G} into \eqref{eq:u_n+1} we obtain
\begin{align}
u_{n+1}(t,x)= \tilde{\Lc}^{x}_{n}(t,T) u_0(t,x),
\end{align}
where
\begin{align}
\tilde{\Lc}^{x}_{n}(t,T)
    &=  \int_{t}^T \Gc^{x}_{n+1}(t,s_{0}) \dd s_{0} \\ & \qquad
            + \sum^{n}_{h=1} \, \sum_{j=1}^{n+1-h} \int_{t}^T \dd s_{0}
\int_{s_{0}}^T \dd s_1 \cdots \int_{s_{j-1}}^T \dd s_j  \hspace{-5pt}\sum_{i\in I_{n+1-h,j}}\hspace{-10pt}
\Gc^{x}_{h}(t,s_{0}) \Gc^{x}_{i_{1}}(t,s_1) \cdots \Gc^{x}_{i_{j}}(t,s_j) .
\end{align}
In order to conclude the proof, it is enough to check that
$\tilde{\Lc}^{x}_{n}(t,T)=\Lc^{x}_{n+1}(t,T)$. By exchanging the indexes in the sums, we {obtain}
\begin{align}
 \tilde{\Lc}^{x}_{n}(t,T)
    &=\int_{t}^T \Gc^{x}_{n+1}(t,s_{0}) \dd s_{0} \\ & \qquad
        +\sum^{n}_{j=1} \, \sum_{h=1}^{n+1-j}
    \int_{t}^T \dd s_{0} \int_{s_{0}}^T \dd s_1 \cdots \int_{s_{j-1}}^T \dd s_j \hspace{-5pt} \sum_{i\in I_{n+1-h,j}}\hspace{-10pt}
    \Gc^{x}_{h}(t,s_{0})  \Gc^{x}_{i_{1}}(t,s_1) \cdots \Gc^{x}_{i_{j}}(t,s_j)
\intertext{(setting $l=j+1$)}
 &=\int_{t}^T \Gc^{x}_{n+1}(t,s_{0}) \dd s_{0}\\
 &\quad+\sum^{n+1}_{l=2} \, \sum_{h=1}^{n+2-l}
 \int_{t}^T \dd s_{0} \int_{s_{0}}^T \dd s_1 \cdots \int_{s_{l-2}}^T \dd s_{l-1} \hspace{-5pt} \sum_{i\in I_{n+1-h,l-1}}\hspace{-10pt}
 \Gc^{x}_{h}(t,s_{0})\Gc^{x}_{i_{1}}(t,s_1) \cdots \Gc^{x}_{i_{l-1}}(t,s_{l-1})
\intertext{(replacing the integration variables: $(\dd s_{0}, \dd s_1,\cdots, \dd s_{l-1})\to (\dd
r_1, \dd r_2,\cdots, \dd r_{l})$)}
 &=\int_{t}^T \Gc^{x}_{n+1}(t,s_{0}) \dd s_{0} \\ & \qquad
        +\sum^{n+1}_{l=2} \, \sum_{h=1}^{n+2-l}
 \int_{t}^T \dd r_1 \int_{r_1}^T \dd r_2 \cdots \int_{r_{l-1}}^T \dd r_{l} \hspace{-5pt} \sum_{i\in I_{n+1-h,l-1}}\hspace{-10pt}
 \Gc^{x}_{h}(t,r_1)\Gc^{x}_{i_{1}}(t,r_2) \cdots \Gc^{x}_{i_{l-1}}(t,r_{l})\\
 &=\int_{t}^T \Gc^{x}_{n+1}(t,s_{0}) \dd s_{0} \\ & \qquad
        +\sum^{n+1}_{l=2} \int_{t}^T \dd r_1
 \int_{r_1}^T \dd r_2 \cdots \int_{r_{l-1}}^T \dd r_{l} \sum_{h=1}^{n+2-l}
 \sum_{i\in I_{n+1-h,l-1}} \hspace{-10pt} \Gc^{x}_{h}(t,r_1) \Gc^{x}_{i_{1}}(t,r_2) \cdots
 \Gc^{x}_{i_{l-1}}(t,r_{l})
\intertext{(by definition \eqref{eq:def_Ln_bis})}
 &=\sum^{n+1}_{l=1}
 \int_{t}^T \dd r_1 \int_{r_1}^T \dd r_2 \cdots\int_{r_{l-1}}^T \dd r_{l}
 \sum_{z\in I_{n+1,l}} \Gc^{x}_{z_1}(t,r_1) \Gc^{x}_{z_{2}}(t,r_2)
 \cdots \Gc^{x}_{z_{l}}(t,r_{l})
\intertext{(by definition \eqref{eq:def_Ln})}
 &=\Lc^{x}_{n+1}(t,T),
\end{align}
which concludes the proof. $\hfill \Box$

\section{Proof of Theorem \ref{th:error_estimates_taylor}:  error bounds for small times}\label{smalla}\setcounter{equation}{0}
Throughout this section we fix $M$, $N$ and $\overline{T}$.  All of the constants appearing in the
estimates proved in this section \emph{depend on $M,N$ and $\overline{T}$} and will not continue
repeating this below. Under the main Assumption \ref{assumption:parabol_holder_bonded}, {the
operator $(\d_t + \Ac)$} admits a unique fundamental solution $\Gamma=\Gamma(t,x;T,y)$ for which
the following classical Gaussian estimates hold (see \cite{friedman-parabolic}, Chapter 1).
\begin{lemma}\label{lem:gaussian_estimates}
For any $\varepsilon>0$ and $\b,\ggg\in\mathbb{N}_{0}^{d}$ with $|\ggg|\le N+2$, we have
\begin{align}\label{Gaua}
 |(x-y)^{\beta}\, D_x^{\ggg}\Gamma(t,x;T,y)|\le C \cdot (T-t)^{\frac{|\b|-|\ggg|}{2}}\Gamma^{M+\varepsilon}(t,x;T,y),\qquad 0\leq t<T \leq\overline{T},\quad x,y\in\R^{d},
\end{align}
where $\Gamma^{M+\varepsilon}$ is the fundamental solution of the heat operator
\eqref{eq:heatoperator} and $C$ is a positive constant, only dependent on
$M,N,\overline{T},\varepsilon$ and $|\b|$.
\end{lemma}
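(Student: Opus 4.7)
My plan is to derive the bound via the classical Levi parametrix construction of $\Gamma$, which is exactly the setting of Chapter 1 of \cite{friedman-parabolic} that the statement cites. The idea is to freeze the coefficients of $\Ac$ at a point $y$, producing an operator $\Ac^{(y)}$ with constant-in-$x$ coefficients (in the frozen variable) whose fundamental solution $Z(t,x;T,y)$ is an explicit Gaussian. One then seeks $\Gamma$ in the form
\begin{equation}
\Gamma(t,x;T,y)=Z(t,x;T,y)+\int_t^T\!\!\int_{\R^d}Z(t,x;s,\xi)\,\Phi(s,\xi;T,y)\,\dd\xi\,\dd s,
\end{equation}
where $\Phi$ solves the Volterra integral equation obtained by forcing $(\d_t+\Ac)\Gamma=0$. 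Under Assumption \ref{assumption:parabol_holder_bonded} the coefficients are bounded and (at least) Lipschitz, so the kernel $(\d_t+\Ac)Z(t,x;T,y)$ is controlled by $(T-t)^{-d/2-1+1/2}$ times a Gaussian, which makes the Volterra series for $\Phi$ converge.

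The first step is to record the pointwise Gaussian estimates for $Z$ itself. Since $Z$ is literally a Gaussian density of the frozen operator, direct computation yields, for every $\eps>0$ and multi-indices $\beta,\gamma$ with $|\gamma|\le N+2$,
\begin{equation}
\bigl|(x-y)^{\beta}D_x^{\gamma}Z(t,x;T,y)\bigr|
\le C(T-t)^{\frac{|\beta|-|\gamma|}{2}}\,\Gamma^{M+\eps}(t,x;T,y),
\end{equation}
where the key ingredient is the elementary absorption inequality $|z|^{k}\exp\bigl(-c|z|^2/\tau\bigr)\le C\tau^{k/2}\exp\bigl(-c'|z|^2/\tau\bigr)$ with any $c'<c$, used to trade polynomial prefactors for a slightly larger diffusion constant; this is exactly where the extra $\eps$ in the bounding heat kernel comes from. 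Uniform ellipticity \eqref{cond:parabolicity} ensures that all frozen Gaussians are controlled by the same $\Gamma^{M+\eps}$ independently of the freezing point.

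Next I transfer these bounds to $\Gamma$ through the parametrix representation. Differentiating under the integral sign and applying the estimates for $Z$ and its derivatives, one obtains, by induction on the order of the Volterra iterates, analogous bounds for each iterate with constants summable in the iteration index. The Hölder/Lipschitz continuity of the coefficients up to order $N$ encoded in Assumption \ref{assumption:parabol_holder_bonded}(ii) is exactly what is needed to differentiate the representation up to order $|\gamma|\le N+2$: each spatial derivative of the kernel $(\d_t+\Ac)Z$ brings down at worst one singular factor $(T-t)^{-1/2}$ while simultaneously supplying a $(x-y)$-type factor that is absorbed by the Gaussian, keeping the relevant time integrals convergent. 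Putting the estimates on $Z$ together with those on the convolution term yields \eqref{Gaua} for $\Gamma$.

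The main obstacle I anticipate is bookkeeping the derivative estimates up to order $N+2$: one must show that every time a derivative $D_x^{\gamma}$ falls on the convolution $\int Z\ast\Phi$, either it stays on $Z$ (where the bound is already at hand) or it is transferred onto $\Phi$ via an integration-by-parts / kernel-differentiation argument that only costs a factor $(T-t)^{-1/2}$ and a harmless polynomial in $(x-y)$. Crucially, the loss of regularity at each differentiation is compensated by the Lipschitz regularity of the $N$-th order derivatives of the coefficients, so that the induction closes at $|\gamma|=N+2$. Once these derivative bounds are in place, the polynomial prefactor $(x-y)^{\beta}$ is handled exactly as for $Z$ via the absorption inequality, and the stated bound \eqref{Gaua} follows with a constant depending only on $M,N,\overline{T},\eps$ and $|\beta|$.
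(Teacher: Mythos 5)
The paper does not actually prove Lemma~\ref{lem:gaussian_estimates}: it is stated as a classical result with a pointer to Chapter~1 of \cite{friedman-parabolic}. Your sketch reconstructs exactly the standard proof behind that reference, namely Levi's parametrix method: freeze the coefficients to produce a Gaussian parametrix $Z$, establish the Gaussian bound for $Z$ and its derivatives together with the polynomial-absorption inequality that is responsible for the $\eps$ enlargement of the diffusion constant, and then transfer the bound to $\Gamma$ through the Volterra series. That is the correct approach and consistent with the paper's intent.

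One place where your sketch glosses over a genuine subtlety is the claim that differentiating the convolution term $\int Z\ast\Phi$ ``only costs a factor $(T-t)^{-1/2}$ and a harmless polynomial in $(x-y)$'' per derivative. Taken literally, differentiating $Z(t,x;s,\xi)$ in $x$ up to order $|\gamma|$ produces a factor $(s-t)^{-|\gamma|/2}$, and the time integral $\int_t^T (s-t)^{-|\gamma|/2}\,\dd s$ diverges already for $|\gamma|\ge 2$. The classical resolution — which you allude to but do not make explicit — is to shift excess derivatives off $Z$ and onto $\Phi$ (via integration by parts in $\xi$, together with cancellation from subtracting and adding $\Phi$ evaluated at the singular diagonal point), and this is precisely where the $C^{N,1}_b$ regularity of the coefficients is consumed: it gives $\Phi$ enough smoothness to absorb $N$ extra derivatives, closing the induction at $|\gamma|=N+2$. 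Your outline is right, but if this were a detailed proof you would need to spell out this derivative redistribution rather than suggest that every derivative can be kept on $Z$.
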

In order to state our theoretical results we need some preliminary estimates on the spatial
derivatives of the solution of the Cauchy problem with {coefficients that may depend on $t$ but
are constant in $x$}. The quality of such estimates depends on the regularity of the terminal data
$\varphi$.
\begin{proposition}\label{l1}
Assume the coefficients of $\Ac$ to be constant in space (i.e. $a_{\alpha}(t,\cdot)\equiv
a_{\alpha}(t)$). Let $\b\in\N_{0}^{d}$ and $\phi\in C_{b}^{k-1,1}\left(\mathbb{R}^d\right)$ for
some $k\in\N_{0}$. Then the solution of the Cauchy problem \eqref{eqpde} satisfies
\begin{align}\label{e1bb}
  \left|D_{x}^{\beta}u(t,x)\right|\le C \cdot (T-t)^{\frac{\min\{k-|\b|,0\}}{2}},\qquad 0\leq t<T\leq\overline{T},\ x\in\mathbb{R}^d,
\end{align}
where $C$ only depends on $M,N,\overline{T},|\b|$ and $\left\|\varphi\right\|_{C^{k-1,1}_{b}}$.
\end{proposition}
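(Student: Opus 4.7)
The plan is to exploit the explicit Gaussian representation \eqref{e10} of $u$, which is available here because the coefficients of $\Ac$ depend only on $t$. Writing
\begin{equation}
u(t,x) = e^{\int_t^T \gamma(s)\,\dd s} \int_{\R^d} \Gamma_0(t,x;T,y)\,\varphi(y)\,\dd y,
\end{equation}
the task reduces to controlling $D_x^{\beta}$ applied to this integral. The key structural fact is that, since $m(t,T)$ does not depend on $x$ and $\Gamma_0(t,x;T,y)$ enters only through the shift $y - x - \mv(t,T)$, one has $D_x^{\alpha}\Gamma_0 = (-1)^{|\alpha|} D_y^{\alpha}\Gamma_0$ for every multi-index $\alpha$ (this is precisely identity \eqref{prop:deriv}). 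This lets me transfer $x$-derivatives onto the terminal datum $\varphi$ via integration by parts, up to the regularity available for $\varphi$.

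I would then split the analysis in two cases according to $|\beta|$ versus $k$. If $|\beta| \le k$, I move all $|\beta|$ derivatives onto $\varphi$. When $|\beta| \le k-1$ this is immediate and $D^{\beta}\varphi$ is bounded by $\|\varphi\|_{C_b^{k-1,1}}$; when $|\beta| = k$ the last derivative is absorbed by the Lipschitz constant of $D^{k-1}\varphi$ (rigorously by a standard mollification of $\varphi$ and a limit argument). Since $\Gamma_0(t,x;T,\cdot)$ is a probability density, one obtains
\begin{equation}
|D_x^{\beta} u(t,x)| \le C\, \|\varphi\|_{C_b^{k-1,1}},
\end{equation}
which matches \eqref{e1bb} because $\min\{k-|\beta|,0\} = 0$ in this regime.

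If instead $|\beta| > k$, I transfer exactly $k$ derivatives onto $\varphi$ and leave the remaining $|\beta|-k$ derivatives on $\Gamma_0$, yielding
\begin{equation}
|D_x^{\beta} u(t,x)| \le C\, \|\varphi\|_{C_b^{k-1,1}} \int_{\R^d} \bigl|D_x^{\beta'} \Gamma_0(t,x;T,y)\bigr|\,\dd y, \qquad |\beta'| = |\beta|-k.
\end{equation}
Applying Lemma \ref{lem:gaussian_estimates} (which certainly applies to the Gaussian $\Gamma_0$, with multi-index $\b=0$ and $\ggg=\beta'$) gives the pointwise bound $|D_x^{\beta'}\Gamma_0| \le C\,(T-t)^{-(|\beta|-k)/2}\,\Gamma^{M+\varepsilon}$, and integrating $\Gamma^{M+\varepsilon}$ against $1$ produces a constant. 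Hence
\begin{equation}
|D_x^{\beta} u(t,x)| \le C\,(T-t)^{-(|\beta|-k)/2},
\end{equation}
which is exactly \eqref{e1bb} in this regime, since $\min\{k-|\beta|,0\}/2 = (k-|\beta|)/2$.

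The only mildly delicate point (more technicality than real obstacle) is justifying the $k$-fold integration by parts when $\varphi$ has only Lipschitz $(k-1)$-th derivatives, so that $D^{k}\varphi$ exists merely almost everywhere. This is handled in the standard way by regularising $\varphi$ via convolution with a mollifier, applying the argument to the smooth approximation, and then passing to the limit using the fact that all bounds depend on $\varphi$ only through $\|\varphi\|_{C_b^{k-1,1}}$, a norm stable under such approximation. Everything else is a direct combination of the translation-invariance identity \eqref{prop:deriv}, integration by parts, and the Gaussian derivative bounds of Lemma \ref{lem:gaussian_estimates}.
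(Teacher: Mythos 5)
Your proof is correct, but it takes a genuinely different route from the paper's. The paper never differentiates $\varphi$ at all: it observes that $\int_{\R^d} p(y)\, D_x^{\gamma}\Gamma_0(t,x;T,y)\,\dd y = 0$ for any polynomial $p$ of degree strictly less than $|\gamma|$, which allows it to subtract the Taylor polynomial $\mathbf{T}^{\varphi}_{x,h-1}$ with $h=\min\{|\beta|,k\}$ under the integral at no cost, and then to bound the Taylor remainder by $C\,|x-y|^{h}$ using only the Lipschitz constant of $D^{k-1}\varphi$. The moment estimate of Lemma \ref{lem:gaussian_estimates} (with $|\b|=h$, $\gamma=\beta$) then produces the exponent $(h-|\beta|)/2=\min\{k-|\beta|,0\}/2$ in one line. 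Your argument instead integrates by parts to push $\min\{|\beta|,k\}$ derivatives onto $\varphi$ and applies the Gaussian derivative bound to the $|\beta|-\min\{|\beta|,k\}$ left on $\Gamma_0$. Both approaches rest on the same two ingredients, namely the explicit Gaussian representation \eqref{e10} and Lemma \ref{lem:gaussian_estimates} (which indeed applies here to $\Gamma_0=\Gamma$ since the coefficients are spatially constant), and they give the same exponent. The difference is that the paper's Taylor-remainder device cleanly sidesteps the mollification step you correctly flag as needed to justify the $k$-th integration by parts against a $C_b^{k-1,1}$ datum: the remainder bound invokes only the Lipschitz seminorm of $D^{k-1}\varphi$, never the a.e.-defined $D^{k}\varphi$. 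Your route is a little less streamlined at that borderline level of regularity but is otherwise sound, including the implicit multi-index decomposition $\beta=\beta'+\beta''$ with $|\beta''|=\min\{|\beta|,k\}$, which always exists componentwise.
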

\begin{proof}
As $\Ac$ has space-independent coefficients, the fundamental solution of $(\d_t+\Ac)$ 
is the Gaussian function in \eqref{e22and}. A direct computation shows that for any polynomial
function $p=p(y)$ we have
\begin{align}
\int\limits_{\mathbb{R}^d}  p(y)\Gamma_{0}(t,x;T,y)\, \dd y= \bar{p}(x),
\end{align}
where $\bar{p}$ is a polynomial with degree $\mathrm{deg}(\bar{p})=\mathrm{deg}(p)$. Thus, for any
$\ggg\in\N_{0}^{d}$ with $|\ggg|> \mathrm{deg}(p)$ we have
\begin{align}\label{eq:integral_polynom_derivat}
 \int\limits_{\mathbb{R}^d}p(y) D_x^{\ggg} \Gamma_{0}(t,x;T,y) \, \dd y=D_x^{\ggg} \int\limits_{\mathbb{R}^d}  p(y) \Gamma_{0}(t,x;T,y)\, \dd y = 0.
\end{align}
In particular, let us set $h=\min\{|\b|,k\}$ and denote by $ \mathbf{T}^{\varphi}_{\xbar,h}$ the
$h$-th order Taylor polynomial of $\varphi$ centered at $\xbar$, i.e.,
\begin{align}\label{eqtayl}
 \mathbf{T}^{\varphi}_{\xbar,h}(x)=\sum_{|\ggg|\leq
 h}\frac{D^{\ggg}\varphi(\xbar)}{\ggg!}(x-\xbar)^{\ggg} ,
\end{align}
where, by convention, when $h=-1$, then $\mathbf{T}^{\varphi}_{\xbar,-1}\equiv 0$. Then we have
\begin{align}\label{eq:integral_Taylor_derivat}
 \int\limits_{\mathbb{R}^d}\mathbf{T}^{\varphi}_{x,h-1}(y) D_x^{\beta} \Gamma_{0}(t,x;T,y) \, \dd y= 0.
\end{align}
Now, by Duhamel's principle we have
\begin{align}\label{e10-2}
 u(t,x)= \ee^{\int_t^T \g(s) \dd s} \int\limits_{\mathbb{R}^d} \Gamma_{0}(t,x;T,y) \varphi(y)\, \dd y ,\quad t<T,\ x\in\mathbb{R}^d .
\end{align}
Next, since $\varphi\in C_{b}^{k-1,1}(\mathbb{R}^d)$, by \eqref{eq:integral_Taylor_derivat} we
obtain
\begin{align}
D_x^{\beta}  u(t,x)= \ee^{\int_t^T \g(s) \dd s} \int\limits_{\mathbb{R}^d} \varphi(y) D_x^{\beta}
\Gamma_{0}(t,x;T,y)\, \dd y =\ee^{\int_t^T \g(s) \dd s}\int\limits_{\mathbb{R}^d} (\varphi(y)-
\mathbf{T}^{\varphi}_{x,h-1}(y)) D_x^{\beta} \Gamma_{0}(t,x;T,y)\, \dd y.
\end{align}
Thus, {by the Taylor theorem with integral remainder,} we obtain
\begin{align}
 \left|D_x^{\beta}  u(t,x)\right| \leq C \int\limits_{\mathbb{R}^d} |x-y|^{h} \left|D_x^{\beta} \Gamma_{0}(t,x;T,y)\right|\, \dd y,
\end{align}
where $C$ depends on $\left\| \varphi \right\|_{C_{b}^{k-1,1}}$. The thesis follows from Lemma
\ref{lem:gaussian_estimates} and from
\begin{align}
\int\limits_{\mathbb{R}^d} \Gamma^{M+\varepsilon}(t,x;T,y)\, \dd y=1.
\end{align}
\end{proof}
\noindent Hereafter, we assume all the hypotheses of Theorem \ref{th:error_estimates_taylor} are
satisfied.  The proof of Theorem \ref{th:error_estimates_taylor} is based on the following lemmas.
\begin{lemma}\label{lemma:taylor1}
Under the hypotheses of Theorem \ref{th:error_estimates_taylor}, for any $\bar{x}\in\mathbb{R}^d$
and $N\in\N_{0}$, we have
\begin{align}
 u(t,x)-\bar{u}_{N}^{(\bar{x})}(t,x) = \int_t^T \int\limits_{\mathbb{R}^d} \Gamma(t,x;s,\xi) \sum_{n=0}^N
 \left(\Ac - \bar{\Ac}^{(\bar{x})}_n\right) u^{(\bar{x})}_{N-n}(s,\xi)\, \dd \xi \dd s, \quad
 t<T,\ x\in\mathbb{R}^d,
\end{align}
where {the function $u$ is the solution of \eqref{eqpde}, the function} $\bar{u}_{N}^{(\bar{x})}$ is
the $N$th order approximation in \eqref{Taylor_approx_k_N_xbar} and
\begin{align}
\bar{\Ac}^{(\bar{x})}_n = \sum_{i=0}^n  \Ac^{(\bar{x})}_i.
\end{align}
\end{lemma}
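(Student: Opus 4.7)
The plan is to apply the parabolic operator $(\d_t + \Ac)$ directly to the approximation $\bar u^{(\bar x)}_N = \sum_{n=0}^N u^{(\bar x)}_n$, rearrange the resulting double sum so that the inhomogeneity takes the form $\sum_{n=0}^N(\Ac-\bar\Ac^{(\bar x)}_n)u^{(\bar x)}_{N-n}$, and then conclude by Duhamel's principle applied to $u-\bar u^{(\bar x)}_N$ (which has vanishing terminal datum at $T$).

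Concretely, I would first use \eqref{eq:v.0.pide.xbar}--\eqref{eq:v.n.pide.xbar} to write, for every $0\le n\le N$,
\begin{align}
 (\d_t+\Ac)u^{(\bar x)}_n \;=\; (\Ac-\Ac^{(\bar x)}_0)u^{(\bar x)}_n \;-\;\sum_{h=1}^{n}\Ac^{(\bar x)}_h u^{(\bar x)}_{n-h},
\end{align}
where the second sum is understood to vanish for $n=0$. Summing over $n=0,\dots,N$ gives
\begin{align}
 (\d_t+\Ac)\bar u^{(\bar x)}_N \;=\; \Ac\,\bar u^{(\bar x)}_N \;-\;\sum_{j=0}^{N}\sum_{m=0}^{N-j}\Ac^{(\bar x)}_j u^{(\bar x)}_m,
\end{align}
after relabelling $m=n-h$ and absorbing the $\Ac^{(\bar x)}_0$ term into the $j=0$ slice. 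The key algebraic step is to observe that this double sum is precisely what is obtained when one expands $\sum_{n=0}^N(\Ac-\bar\Ac^{(\bar x)}_n)u^{(\bar x)}_{N-n}$: writing $\bar\Ac^{(\bar x)}_n=\sum_{i=0}^n\Ac^{(\bar x)}_i$ and substituting $m=N-n$, the index set $\{(i,m):0\le i\le n,\ 0\le n\le N\}$ becomes $\{(i,m):0\le i,m,\ i+m\le N\}$, which matches exactly. Hence
\begin{align}
 (\d_t+\Ac)\bigl(u-\bar u^{(\bar x)}_N\bigr) \;=\; -\sum_{n=0}^{N}\bigl(\Ac-\bar\Ac^{(\bar x)}_n\bigr)u^{(\bar x)}_{N-n},
\end{align}
with $(u-\bar u^{(\bar x)}_N)(T,\cdot)=\varphi-\varphi-0=0$.

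At this point I invoke Duhamel's principle for $(\d_t+\Ac)$, whose fundamental solution $\Gamma(t,x;T,y)$ exists under Assumption \ref{assumption:parabol_holder_bonded}: the unique classical solution of an inhomogeneous backward problem with zero terminal datum is given by the time-space convolution of $\Gamma$ against the source. Applied to the identity above this yields
\begin{align}
 u(t,x)-\bar u^{(\bar x)}_N(t,x) \;=\; \int_t^{T}\!\!\int_{\R^d}\Gamma(t,x;s,\xi)\sum_{n=0}^{N}\bigl(\Ac-\bar\Ac^{(\bar x)}_n\bigr)u^{(\bar x)}_{N-n}(s,\xi)\,\dd\xi\,\dd s,
\end{align}
which is the claim.

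The main obstacle is not the symbolic computation, which is a careful but routine reindexing, but rather justifying that Duhamel's principle actually applies: one needs enough regularity and decay of the source $\sum_{n}(\Ac-\bar\Ac^{(\bar x)}_n)u^{(\bar x)}_{N-n}$ and of $\bar u^{(\bar x)}_N$ itself to guarantee that the representation formula holds pointwise. This follows from the explicit formula $u^{(\bar x)}_n = \Lc_n^x(t,T)u^{(\bar x)}_0$ proved in Theorem \ref{th:un_general_repres} together with the Gaussian character of $u^{(\bar x)}_0$ in \eqref{e10}, so that every $u^{(\bar x)}_n$ and all of its spatial derivatives inherit sufficient smoothness and the standard Gaussian upper bounds for $\Gamma$ suffice to make the convolution well-defined; this is where the boundedness and regularity in Assumption \ref{assumption:parabol_holder_bonded}(ii) are used.
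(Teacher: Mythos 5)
Your proof is correct and follows essentially the same strategy as the paper: establish the source identity $(\d_t+\Ac)\bar u^{(\bar x)}_N=\sum_{n=0}^N(\Ac-\bar\Ac^{(\bar x)}_n)u^{(\bar x)}_{N-n}$, note that $u-\bar u^{(\bar x)}_N$ vanishes at $T$, and invoke Duhamel's principle for the fundamental solution $\Gamma$ of $(\d_t+\Ac)$. The only cosmetic difference is that the paper verifies the source identity by induction on $N$, whereas you sum the nested equations \eqref{eq:v.0.pide.xbar}--\eqref{eq:v.n.pide.xbar} directly and reindex the resulting double sum over $\{(j,m):j,m\ge 0,\ j+m\le N\}$; both are routine and equivalent.
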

\begin{proof}
We first prove the identity
\begin{align}\label{key_lemma_eq}
 \left(\partial_t +\Ac\right) \bar{u}_{N}^{(\bar{x})} (t,x) =  \sum_{n=0}^N (\Ac -
 \bar{\Ac}^{(\bar{x})}_n) u^{(\bar{x})}_{N-n}(t,x), \quad t<T,\ x\in\mathbb{R}^d.
\end{align}
For $N=0$ we have
\begin{align}
\left(\partial_t +\Ac\right) \bar{u}_{0}^{(\bar{x})}
 &= \left(\Ac - \Ac^{(\bar{x})}_0\right) u^{(\bar{x})}_{0} ,
\end{align}
because $\left(\partial_t + \Ac^{(\bar{x})}_0\right) u^{(\bar{x})}_{0}= 0 $ by definition
\eqref{eq:v.0.pide.xbar}. We assume now \eqref{key_lemma_eq} holds for $N\geq 0$ and we prove it
to hold for $N+1$.  We have
\begin{align}
&\left(\partial_t +\Ac\right) \bar{u}_{N+1}^{(\bar{x})}  \\
    &= (\partial_t +\Ac) \bar{u}_{N}^{(\bar{x})}  +(\partial_t
            +\Ac) u^{(\bar{x})}_{N+1} \\
    &= \sum_{n=0}^N \big(\Ac - \bar{\Ac}^{(\bar{x})}_n\big) u^{(\bar{x})}_{N-n}  +\big(\Ac-\Ac_0^{(\bar{x})}\big)            u^{(\bar{x})}_{N+1}  -
            \sum\limits_{n=1}^{N+1} \Ac^{(\bar{x})}_n u^{(\bar{x})}_{N+1-n} &
    &\hspace{-3em}\text{(by inductive hypothesis and by \eqref{eq:v.n.pide.xbar})} \\
    &= \sum_{n=1}^{N+1} \left(\Ac - \bar{\Ac}^{(\bar{x})}_{n-1}\right) u^{(\bar{x})}_{N+1-n}
 +\big(\Ac-\Ac_0^{(\bar{x})}\big) u^{(\bar{x})}_{N+1}  - \sum\limits_{n=1}^{N+1} \Ac^{(\bar{x})}_n u^{(\bar{x})}_{N+1-n} &
    &\hspace{-3em}\text{(by shifting the index of  the first sum)}\\
    &= \sum_{n=1}^{N+1} \big(\Ac - \bar{\Ac}^{(\bar{x})}_{n}\big) u^{(\bar{x})}_{N+1-n}  +\big(\Ac-\Ac_0^{(\bar{x})}\big)
 u^{(\bar{x})}_{N+1}=
 \sum_{n=0}^{N+1} \big(\Ac - \bar{\Ac}^{(\bar{x})}_{n}\big) u^{(\bar{x})}_{N+1-n}.
\end{align}
Now, since $u$ is the classical solution of \eqref{eqpde}, we have by \eqref{key_lemma_eq}
that $v:=u-\bar{u}_{N}^{(\bar{x})}$ solves the following problem
\begin{align}
\begin{cases}
 (\d_t + \Ac ) v(t,x) =- \sum\limits_{n=0}^N (\Ac - \bar{\Ac}^{(\bar{x})}_n) u^{(\bar{x})}_{N-n}(t,x),
 \qquad & t<T,\ x\in\mathbb{R}^d, \\
 v(T,x) =  0, &  x \in\mathbb{R}^d,
\end{cases}
\end{align}
The thesis follows by Duhamel's principle.
\end{proof}
\begin{lemma}\label{lem:der_u0}
Under the assumptions of Theorem \ref{th:error_estimates_taylor}, for any multi-index $\b
\in\mathbb{N}_0^{d}$ we have
\begin{align}\label{e1b}
  \left|D_{x}^{\b}u^{(\bar{x})}_0(t,x)\right|&\le C \cdot (T-t)^{\frac{\min\{k-|\b|,0\}}{2}},\qquad \quad 0\leq t<T\leq \overline{T},\quad
  x,\bar{x}\in\mathbb{R}^d.
\intertext{Moreover, if $N\ge 1$ then for any $n\in\N$, $n\leq N$, we have} \label{eq:der_u_n}
 \left|D^{\beta}_{x}u^{(\bar{x})}_n(t,x)\right|&\le C \cdot (T-t)^{\frac{n+k-|\b|}{2}}\left(1+|x-\bar{x}|^{n}\left(T-t\right)^{-\frac{n}{2}}
 \right),\qquad 0\leq t<T\leq \overline{T},\ x,\bar{x}\in\mathbb{R}^d.
\end{align}
The constants in \eqref{e1b} and \eqref{eq:der_u_n} depend only on $M,N,\overline{T},|\b|$ and
$\left\|\phi\right\|_{C^{k-1,1}_{b}}$.
\end{lemma}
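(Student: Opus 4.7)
The estimate \eqref{e1b} for $u_0^{(\bar{x})}$ is an immediate application of Proposition \ref{l1}: since the spatial coefficients of $\Ac_0^{(\bar{x})}$ are frozen at $\bar{x}$, the operator $\Ac_0^{(\bar{x})}$ has coefficients depending only on $t$, and \eqref{e1bb} with terminal datum $\varphi\in C^{k-1,1}_b(\R^d)$ yields \eqref{e1b} directly.

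For \eqref{eq:der_u_n}, I plan to proceed by induction on $n\in\{1,\dots,N\}$, with \eqref{e1b} serving as the base case. Duhamel's principle applied to \eqref{eq:v.n.pide.xbar} gives the representation
\begin{align}
u_n^{(\bar{x})}(t,x) = \int_t^T \int_{\R^d} \Gamma_0^{(\bar{x})}(t,x;s,\xi) \sum_{h=1}^n \Ac_h^{(\bar{x})}(s,\xi)\, u_{n-h}^{(\bar{x})}(s,\xi)\, \dd\xi\, \dd s,
\end{align}
where $\Gamma_0^{(\bar{x})}$ is the Gaussian fundamental solution of $\d_t + \Ac_0^{(\bar{x})}$ and, by \eqref{eq:A.expand.xbar}, the coefficients $a_{\alpha,h}^{(\bar{x})}(s,\xi)$ of $\Ac_h^{(\bar{x})}$ are homogeneous polynomials of degree exactly $h$ in $(\xi-\bar{x})$ whose coefficients are uniformly bounded by Assumption \ref{assumption:parabol_holder_bonded}.

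To estimate $D_x^\beta$ of this integral, I would use the symmetry \eqref{prop:deriv} to replace $D_x^\beta\Gamma_0^{(\bar{x})}$ by $(-1)^{|\beta|}D_\xi^\beta\Gamma_0^{(\bar{x})}$ and integrate by parts in $\xi$, transferring the derivatives to the product $\Ac_h^{(\bar{x})}(s,\xi)\, u_{n-h}^{(\bar{x})}(s,\xi)$. The Leibniz rule distributes $D_\xi^\beta$ between the polynomial coefficient $a_{\alpha,h}^{(\bar{x})}$, whose $\beta_1$-derivative is bounded pointwise by $C|\xi-\bar{x}|^{h-|\beta_1|}$ (and vanishes if $|\beta_1|>h$), and a derivative of $u_{n-h}^{(\bar{x})}$ of order $|\beta_2|+|\alpha|$, with $\beta_1+\beta_2=\beta$ and $|\alpha|\le 2$. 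The inductive hypothesis controls this last factor when $n-h\ge 1$, while \eqref{e1b} handles the case $h=n$. Splitting $|\xi-\bar{x}|\le|\xi-x|+|x-\bar{x}|$ isolates the $|x-\bar{x}|$-dependence, and the Gaussian moment bound $\int\Gamma_0^{(\bar{x})}(t,x;s,\xi)|\xi-x|^m\dd\xi\le C(s-t)^{m/2}$ (which follows from $\Cv(t,s)$ and $\mv(t,s)$ being $O(s-t)$) reduces the spatial integral to a constant. What remains are time integrals of Beta-function type $\int_t^T(T-s)^a(s-t)^b\dd s = B(a+1,b+1)(T-t)^{a+b+1}$, whose exponents combine to reproduce the two powers $(T-t)^{(n+k-|\beta|)/2}$ and $(T-t)^{(k-|\beta|)/2}|x-\bar{x}|^n$ asserted in \eqref{eq:der_u_n}.

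The main obstacle will be the bookkeeping: after summing over all admissible $(h,\alpha,\beta_1,\beta_2)$ and over every way of splitting the $|\xi-\bar{x}|$ factors into $|\xi-x|$- and $|x-\bar{x}|$-parts, every contribution must be dominated by one of the two summands $C(T-t)^{(n+k-|\beta|)/2}$ or $C(T-t)^{(k-|\beta|)/2}|x-\bar{x}|^n$. A delicate point is that when $|\beta|>k$ some Beta integrals become borderline; their convergence relies on the sharper, uniform-in-$s$ form of \eqref{e1bb} (the bound $\min\{k-|\beta_2|-|\alpha|,0\}/2$, which in particular equals $0$ when $|\beta_2|+|\alpha|\le k$) together with the standing assumptions $0\le k\le 2$ and $|\alpha|\le 2$. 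A cleaner alternative, if the joint induction in $(n,\beta)$ becomes unwieldy, is to first establish \eqref{eq:der_u_n} for $\beta=0$ by the induction above and then bootstrap to general $\beta$ via the Gaussian derivative estimates \eqref{Gaua} applied to the closed-form representation $u_n^{(\bar{x})}=\Lc_n^x(t,T)u_0^{(\bar{x})}$ from Theorem \ref{th:un_general_repres}.
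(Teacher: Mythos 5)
The base case \eqref{e1b} is fine and matches the paper. For $n\geq 1$, however, your primary route via the Duhamel representation runs into a genuine problem that the paper's proof avoids by a structurally different argument.

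The paper does not use Duhamel at this point. It starts from the closed form $u_n^{(\bar{x})}=\Lc_n^{x,(\bar{x})}(t,T)\,u_0^{(\bar{x})}(t,x)$ furnished by Theorem~\ref{th:un_general_repres}. In that formula all derivatives act on $u_0^{(\bar{x})}$ \emph{at time $t$}, and all time integrals are of the form $\int_{s_{j-1}}^T\cdots\dd s_j$ with integrands that are smooth in the time variables (the operators $\Gc^x$ contain $\mv(t,s),\Cv(t,s)=O(s-t)$ but never a factor that blows up near $s=T$). One then applies Proposition~\ref{l1} directly; no singular Beta integrals appear for any $k\in\{0,1,2\}$.

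Your Duhamel route instead produces $\int_t^T\!\int\Gamma_0(t,x;s,\xi)\sum_{h=1}^n\Ac_h^{(\bar{x})}(s)u_{n-h}^{(\bar{x})}(s,\xi)\,\dd\xi\dd s$, in which the inner factor is bounded via Proposition~\ref{l1}/induction as $(T-s)^{\min\{k-|\beta_2|-|\alpha|,\,0\}/2}$ or the like, with the singularity now sitting at $s=T$. The resulting Beta integrals $\int_t^T(s-t)^a(T-s)^b\dd s$ require $b>-1$, and this \emph{fails} whenever $|\beta_2|+|\alpha|-k\ge 2$. With $|\alpha|=2$ this already happens for $k=0$ and $\beta_2=0$ ($b=-1$, a logarithmic divergence), and it gets worse as $|\beta_2|$ grows. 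Your claim that the ``$\min$'' cap in \eqref{e1bb} rescues these cases is not correct: the cap only clamps the exponent to zero when $|\beta_2|+|\alpha|\le k$; in the dangerous regime $|\beta_2|+|\alpha|>k$ it leaves $b=(k-|\beta_2|-|\alpha|)/2$ unchanged, which is $\le -1$. The actual repair within the Duhamel framework is a preliminary integration by parts that moves one derivative off $u_{n-h}$ onto the kernel and the polynomial coefficient when $|\alpha|=2$ — exactly what the paper does in the proof of Theorem~\ref{th:error_estimates_taylor} when it splits $I_n$ into $I_{n,1}$ and $I_{n,2}$ — but your outline does not include that step.

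Your ``cleaner alternative'' (using $u_n^{(\bar{x})}=\Lc_n^x(t,T)u_0^{(\bar{x})}$) is indeed the paper's method, but the proposed two-step bootstrap is both unnecessary and unsound: once one has the closed form, one simply differentiates it for arbitrary $\beta$ and applies Proposition~\ref{l1} (there is nothing special to establish at $\beta=0$ first); and starting the bootstrap from the Duhamel argument at $\beta=0$ still inherits the $|\alpha|=2$, $k=0$ divergence above. I'd recommend dropping the Duhamel route and working directly from $\Lc_n^x u_0$, as in the paper's $n=1$ computation, carrying out the straightforward Leibniz bookkeeping on the polynomial-in-$(x-\bar x)$ coefficients and the derivatives of $u_0^{(\bar x)}$.
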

\begin{proof}
In this proof, $\{C_{i}\}_{i\ge1}$ denote some positive constants that depend only on
$M,N,\overline{T}$ and $\left\|\phi\right\|_{C^{k-1,1}_{b}}$.
For clarity, write the operators appearing in Theorem \ref{th:un_general_repres} as ${\Lc}^{x,(\bar{x})}_k$ and ${\Gc}^{x,(\bar{x})}_k$ in order to indicate that these operators are constructed using the expansion point $\xb$ and act on the variable $x$.
\par
For $n=0$, the thesis follows
directly from Proposition \ref{l1} since $u^{(\bar{x})}_0$ solves problem \eqref{eq:v.0.pide.xbar}.
Next we prove the assertion for $n=1$.  By Theorem \ref{th:un_general_repres}, for any
$\bar{x}\in\mathbb{R}^d$ we have
\begin{align}
 u_1^{(\bar{x})}(t,x)&= \Lc^{x,(\bar{x})}_1(t,T) u^{(\bar{x})}_0(t,x)
 =\int_t^T \Gc^{x,(\bar{x})}_{1}(t,s) u^{(\bar{x})}_0(t,x)\, \dd s
 \\
 &= \sum_{|\ggg|\leq 2}\int_t^T a^{(\bar{x})}_{\ggg,1}\left(s,x+\mv^{(\bar{x})}(t,s)+ \Cv^{(\bar{x})}(t,s)\nabla_{x}\right)\, \dd
 s\,  D^{\ggg}_{x}  u^{(\bar{x})}_0(t,x)\\
\intertext{(by \eqref{eq:A.expand.xbar} with $n=1$)}\label{and1}
 &=\sum_{|\ggg|\leq 2} \int_t^T \langle\nabla_{x} a_{\ggg}(s,\bar{x}),x-\bar{x}+\mv^{(\bar{x})}(t,s)+ \Cv^{(\bar{x})}(t,s)\nabla_{x}\rangle\, \dd s\,  D^{\ggg}_{x}
 u^{(\bar{x})}_0(t,x).
\end{align}
Therefore we obtain
  $$\left|D^{\beta}_{x}u^{(\bar{x})}_1(t,x)\right|\le I_{1}+I_{2}+I_{3}+I_{4}$$
where
\begin{align}
I_1
    &=  \sum_{|\ggg|\leq 2} \int_t^T \left|\nabla_{x} a_{\ggg}(s,\bar{x})\right| \dd s\,  \left|x-\bar{x}\right|\left|D^{\b+\ggg}_{x}
  u^{(\bar{x})}_0(t,x)\right|, \\
I_2
    &=  \sum_{|\ggg|\leq 2} \int_t^T \left|\nabla_{x} a_{\ggg}(s,\bar{x})\right|\left|\mv^{(\bar{x})}(t,s)\right| \dd s\, \left|D^{\b+\ggg}_{x}
  u^{(\bar{x})}_0(t,x)\right|, \\
I_3
    &=  \sum_{|\ggg|\leq 2} \int_t^T \left|\nabla_{x} a_{\ggg}(s,\bar{x})\right|\left|\Cv^{(\bar{x})}(t,s)\right| \dd s\, \left|\nabla_{x}D^{\b+\ggg}_{x}
  u^{(\bar{x})}_0(t,x)\right|, \\
I_4
    &=  \sum_{|\ggg|\leq 2\atop |\delta|\leq |\b|-1} \int_t^T \left|\nabla_{x} a_{\ggg}(s,\bar{x})\right| \dd s\,  \left|D^{\ggg+\delta}_{x}
  u^{(\bar{x})}_0(t,x)\right|.
\end{align}
Now, since $a_{\ggg}\in C^{1,1}_{b}$, by Proposition \ref{l1} we have
\begin{align}
  I_{1}
 &\le C_{1}\sum_{|\ggg|\leq 2}\left|x-\bar{x}\right| (T-t)^{\frac{2+\min\{k-|\b|-|\ggg|,0\}}{2}}\le C_{2}  \cdot  (T-t)^{\frac{1+k-|\b|}{2}} \frac{\left|x-\bar{x}\right|}{\sqrt{T-t}}.
\end{align}
Moreover, since $a_{\ggg}\in C^{1,1}_{b}$ and
$\left|\mv^{(\bar{x})}(t,s)\right|\le C_{3}(s-t)$, we have by Proposition \ref{l1} that
\begin{align}
  I_{2}
 &\le C_{4}\sum_{|\ggg|\leq 2} (T-t)^{2+\frac{\min\{k-|\b|-|\ggg|,0\}}{2}}\le C_{5}  \cdot (T-t)^{\frac{2+k-|\b|}{2}}.
\end{align}
Next, since $a_{\ggg}\in C^{1,1}_{b}$ and
$\left|\Cv^{(\bar{x})}(t,s)\right|\le C_{6} \cdot (s-t)$, we have by Proposition \ref{l1} that
\begin{align}
  I_{3}
 &\le C_{7}\sum_{|\ggg|\leq 2} (T-t)^{2+\frac{\min\{k-1-|\b|-|\ggg|,0\}}{2}}\le C_{8}  \cdot (T-t)^{\frac{1+k-|\b|}{2}}.
\end{align}
Finally, we have the term appearing when $D^{\b}_{x}$ applies to $x-\xbar$ in \eqref{and1}. Using
the same arguments as above we obtain
\begin{align}
   I_{4}\le C_{9}\sum_{|\ggg|\leq 2} (T-t)^{\frac{2+\min\{k+1-|\b|-|\ggg|,0\}}{2}}\le C_{10}  \cdot (T-t)^{\frac{1+k-|\b|}{2}}.
\end{align}
Using all the above estimates, one deduces \eqref{eq:der_u_n} for $n=1$. The general case can be
proved by analogous arguments, using repeatedly the general expression of $u_n^{(\bar{x})}$ provided
by Theorem \ref{th:un_general_repres} and the estimates of Proposition \ref{l1}. We omit the
details for brevity.
\end{proof}
\noindent We are now in the position to prove Theorem \ref{th:error_estimates_taylor}.
\begin{proof}[Proof of Theorem \ref{th:error_estimates_taylor}]
In this proof, $\{C_{i}\}_{i\ge1}$ denote some positive constants dependent only on
$M,N,\overline{T}$ and $\left\|\phi\right\|_{C^{k-1,1}_{b}}$. By Lemma \ref{lemma:taylor1} we have
\begin{align}
 u-\bar{u}_{N}=\sum_{n=0}^N I_n,\qquad\qquad
 I_n(t,x)=\int_t^T \int\limits_{\mathbb{R}^d} \Gamma(t,x;s,\xi)  \left( \Ac - \sum_{i=0}^n \Ac^{x}_i  \right) u^{x}_{N-n}(s,\xi)\, \dd \xi \dd
 s.
\end{align}
Moreover $I_{n}=I_{n,1}+I_{n,2}$ with (cf. \eqref{eqtayl})
\begin{align}
  I_{n,1}(t,x)&=\sum_{|\alpha|\leq 1}\int_t^T \int\limits_{\mathbb{R}^d}
  \left( a_{\alpha}(s,\xi) -
  \mathbf{T}^{a_{\alpha}(s,\cdot)}_{x,n}(\xi)
  \right)\Gamma(t,x;s,\xi)D_{\xi}^{\alpha}
  u^{x}_{N-n}(s,\xi)\, \dd \xi \dd s,\\
  I_{n,2}(t,x)&=\sum_{|\alpha|=2}\int_t^T \int\limits_{\mathbb{R}^d}
  \left( a_{\alpha}(s,\xi) -
  \mathbf{T}^{a_{\alpha}(s,\cdot)}_{x,n}(\xi)
  \right)\Gamma(t,x;s,\xi) D_{\xi}^{\alpha}
  u^{x}_{N-n}(s,\xi)\, \dd \xi \dd s.
\end{align}
Now by Lemma \ref{lem:der_u0} we have
\begin{align}
 \left|I_{n,1}(t,x)\right|&\leq C_{1} \sum_{|\alpha|\leq 1}\int_t^T \int\limits_{\mathbb{R}^d}|\xi-x|^{n+1}\Gamma(t,x;s,\xi)(T-s)^{\frac{N-n-|\alpha|+k}{2}}
  \left(1+ (T-s)^{-\frac{N-n}{2}}|x-\xi|^{N-n}  \right)\, \dd \xi \dd s \\
 &\leq C_{2} \sum_{|\alpha|\leq 1}\int_t^T\left((T-s)^{\frac{N-n+|\a|+k}{2}}(s-t)^{\frac{n+1}{2}}
 +(T-s)^{\frac{-|\a|+k}{2}}(s-t)^{\frac{N+1}{2}}\right)\int\limits_{\mathbb{R}^d}\Gamma^{M+\eps}(t,x;s,\xi)\, \dd \xi \dd
 s\\
 &\le C_{3} \cdot (T-t)^{\frac{N+k+2}{2}}
\end{align}
where we have used Lemma \ref{lem:gaussian_estimates} and the identity
  $$\int_{t}^{T}(T-s)^{n}(s-t)^{k}\, \dd s=\frac{\Gamma_E (k+1) \Gamma_E (n+1)}{\Gamma_E (k+n+2)}(T-t)^{k+n+1},$$
{with $\Gamma_E$ denoting the Euler Gamma function.} To estimate $I_{n,2}$ we first integrate by
parts and obtain
 $$ I_{n,2}(t,x)=-\sum_{|\alpha_1| =1}\sum_{|\alpha_2| = 1}
  \int_t^T \int\limits_{\mathbb{R}^d}
  D_{\xi}^{\alpha_{1}}\left(\left( a_{\alpha_{1}+\alpha_{2}}(t,\xi) -
  \mathbf{T}^{a_{\alpha_{1}+\alpha_{2}}(t,\cdot)}_{x,n}(\xi)
  \right)\Gamma(t,x;s,\xi)\right)D_{\xi}^{\alpha_{2}}
  u^{x}_{N-n}(s,\xi)\, \dd \xi \dd s.$$
Using the same arguments as above one can show that $$\left|I_{n,2}(t,x)\right|\leq C_{4} \cdot
(T-t)^{\frac{N+k+1}{2}}.$$ Finally estimate \eqref{th:error_estim_fund_solution} is obtained by a
straightforward modification of the proof of \eqref{eq:error_estimate} for $k=0$, by means of the
application of Lemma \ref{lem:gaussian_estimates} and the Chapman-Kolmogorov equation. We omit the
details for simplicity.
\end{proof}
\section{Proof of Theorem \ref{th:bootstrapping_convergence}: error bounds for large times}\label{boota}\setcounter{equation}{0}
In agreement with the hypothesis of Theorem \ref{th:bootstrapping_convergence}, throughout this
section we will assume $N\geq 1$.  The proof of Theorem \ref{th:bootstrapping_convergence} is
based on the Chapman-Kolmogorov identity \eqref{eq:semigroup_property} and on the following
classical Schauder estimate (see, for instance, \cite{friedman-parabolic}, Chapter 3).
\begin{lemma}\label{lem:der_u}
Let $u$ be the solution of problem \eqref{eqpde} under Assumption
\ref{assumption:parabol_holder_bonded}.
Then for $0\leq k\leq 2$, we
have
  $$\left\|u(t,\cdot)\right\|_{C^{k-1,1}_{b}(\R^{d})}\le C \left\|\varphi\right\|_{C^{k-1,1}_{b}(\R^{d})},
  \qquad 0\le t\le T\le \overline{T}, $$
where $C$ is a positive constant that depends only on $M$ and $\overline{T}$.
\end{lemma}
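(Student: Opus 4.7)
The plan is to propagate the regularity of the terminal datum $\varphi$ backward in time via the Feynman-Kac representation of $u$ in terms of the diffusion associated with $\Ac$, handling the three cases $k=0,1,2$ separately. Under Assumption \ref{assumption:parabol_holder_bonded}, the drift and diffusion coefficients are uniformly elliptic, bounded, and globally Lipschitz in $x$ (part ii with $N\ge 0$), so the SDE $dX_s^{t,x}=\tilde\mu(s,X_s)\,ds+\sigma(s,X_s)\,dW_s$ with $X_t^{t,x}=x$ (where $\tilde\mu$ and $\sigma$ are determined by $(a_i)$ and $(a_{ij})$) is well-posed and admits a Lipschitz stochastic flow. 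Moreover,
\[
u(t,x)=\mathbb{E}\Bigl[e^{\int_t^T a(s,X_s^{t,x})\,ds}\,\varphi(X_T^{t,x})\Bigr].
\]

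For $k=0$, boundedness of $a$ immediately gives $|u(t,x)|\le e^{M\overline{T}}\|\varphi\|_{L^\infty}$, whence $\|u(t,\cdot)\|_{L^\infty}\le C\|\varphi\|_{L^\infty}$. For $k=1$, with $\varphi$ Lipschitz, the classical SDE estimate $\mathbb{E}|X_T^{t,x}-X_T^{t,y}|\le C|x-y|$ together with Lipschitz dependence of the exponential weight on $x$ (via Gronwall and $a\in C^{0,1}_b$) yields $|u(t,x)-u(t,y)|\le C\|\varphi\|_{C^{0,1}_b}|x-y|$. For $k=2$, use that $C^{1,1}_b$ coefficients make the flow $x\mapsto X_T^{t,x}$ differentiable in $x$ with first variation $\partial_x X_T^{t,x}$ solving a linear SDE with bounded, Lipschitz coefficients. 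Differentiating inside the expectation,
\[
\partial_{x_i}u(t,x)=\mathbb{E}\Bigl[e^{\int_t^T a(s,X_s^{t,x})\,ds}\Bigl(\langle\partial_{x_i}X_T^{t,x},\nabla\varphi(X_T^{t,x})\rangle+\varphi(X_T^{t,x})\int_t^T\langle\partial_{x_i}X_s^{t,x},\nabla a(s,X_s^{t,x})\rangle\,ds\Bigr)\Bigr],
\]
and bounding each factor in $L^\infty$ and in Lipschitz norm (again via Gronwall, using $\varphi\in C^{1,1}_b$ and $a\in C^{1,1}_b$) yields both $\|\nabla_xu(t,\cdot)\|_{L^\infty}\le C\|\varphi\|_{C^{1,1}_b}$ and the Lipschitz bound on $\nabla_xu(t,\cdot)$.

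The main technical obstacle is the case $k=2$: rigorously justifying differentiation under the expectation and obtaining uniform-in-$(t,s)$ Lipschitz estimates for the first variation process $\partial_xX_s^{t,x}$. Both follow from Gronwall arguments applied to the linear SDE satisfied by $\partial_xX$, using the bounded Lipschitz coefficients guaranteed by Assumption \ref{assumption:parabol_holder_bonded} ii), so the resulting constants depend only on $M$ and $\overline{T}$.

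A purely PDE-analytic alternative, following Friedman's Levi parametrix construction of $\Gamma$, writes $u(t,x)=\int\Gamma(t,x;T,y)\varphi(y)\,dy$ and differentiates in $x$ under the integral. The Gaussian derivative bounds from Lemma \ref{lem:gaussian_estimates} produce a $(T-t)^{-|\beta|/2}$ singularity for $D_x^\beta\Gamma$, which is absorbed exactly as in the proof of Proposition \ref{l1} by Taylor-correcting $\varphi$ with the polynomial $\mathbf{T}^\varphi_{x,h-1}$ ($h=\min\{|\beta|,k\}$) and using its $C^{k-1,1}_b$ regularity to control the remainder.
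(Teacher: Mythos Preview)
The paper does not actually prove this lemma: it is stated as a classical Schauder-type estimate and referred to \cite{friedman-parabolic}, Chapter~3. Your Feynman--Kac/stochastic-flow argument is therefore a genuinely different route, and a valid one. For $k=0,1$ the boundedness and Lipschitz estimates on the flow $x\mapsto X_s^{t,x}$ follow from standard Gronwall arguments under Lipschitz drift and diffusion (the latter inherited from uniform ellipticity and $a_{ij}\in C^{0,1}_b$ via the matrix square root); for $k=2$ the $C^{1,1}_b$ regularity of the coefficients---available since Section~\ref{boota} assumes $N\ge1$---gives differentiability of the flow with a first-variation process whose moments and Lipschitz dependence on $x$ are again controlled by Gronwall. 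The probabilistic route makes the dependence of the constants on $M$ and $\overline{T}$ transparent; Friedman's parametrix approach is purely analytic and yields sharper H\"older statements, but at the cost of more machinery.

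One caveat on your PDE alternative: the Taylor-correction step in Proposition~\ref{l1} relies on the exact identity $\int \mathbf{T}^\varphi_{x,h-1}(y)\,D_x^\beta\Gamma_0(t,x;T,y)\,dy=0$, which holds because $\Gamma_0$ is a Gaussian in $y-x$. For the variable-coefficient fundamental solution $\Gamma$ (in particular with a nonzero zeroth-order term $a$) this identity fails, so the argument does not transfer ``exactly as in the proof of Proposition~\ref{l1}''; one needs the full parametrix decomposition $\Gamma=\Gamma_0+\Gamma_0\ast\Phi$ from Friedman to handle the correction terms. Since this is only your secondary sketch and the stochastic argument already delivers the claim, the overall proposal stands.
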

\begin{proof}[Proof of Theorem \ref{th:bootstrapping_convergence}] In this proof, $\{C_{i}\}_{i\ge1}$ denote some positive constants that depend only on $M,N,\overline{T}$ and
$\left\|\phi\right\|_{C^{k-1,1}_{b}}$.  By an iterative use of \eqref{eq:semigroup_property}, the Chapman-Kolmogorov identity, we have
\begin{align}
 u(t_0,x_0)=\int\limits_{\mathbb{R}^{m d}}
 \prod_{i=1}^{m} \Gamma(t_{i-1},x_{i-1};t_{i},x_{i})\phi(x_m)\, \dd x_m \cdots \dd x_1, \quad t_0<T,\quad x_0\in\mathbb{R}^d.
\end{align}
Then, by definition \eqref{def:u_N_m} we obtain
\begin{align}\label{eq:sum_Ii}
 u-\bar{u}_{N,m}=\sum_{j=1}^m I_j,
\end{align}
where
\begin{align}
I_j(t_0,x_0)
&= \int\limits_{\mathbb{R}^{m d}} \prod_{i=1}^{j-1}
 \bar{\Gamma}_{N}(t_{i-1},x_{i-1};t_{i},x_{i})\left(\bar{\Gamma}_{N}-\Gamma\right)(t_{j-1},x_{j-1};t_{j},x_{j}) \\ & \qquad
 \times \prod_{i=j+1}^{m} \Gamma(t_{i-1},x_{i-1};t_{i},x_{i})\, \phi(x_m)\, \dd x_m \cdots \dd x_1 \\
&=\int\limits_{\mathbb{R}^{(j-1) d}}\,\prod_{i=1}^{j-1}
 \bar{\Gamma}_{N}(t_{i-1},x_{i-1};t_{i},x_{i})\int\limits_{\mathbb{R}^{d}}
 \left(\bar{\Gamma}_{N}-\Gamma\right)(t_{j-1},x_{j-1};t_{j},x_{j})
 u(t_j,x_j) \dd x_j \, \dd x_{j-1} \cdots \dd x_{1},
\end{align}
where we have used Fubini's theorem and the Chapman-Kolmogorov identity.
Now by Lemma \ref{lem:der_u} and Theorem \ref{th:error_estimates_taylor} we obtain
\begin{align}
 \left|  \int_{\mathbb{R}^{d}}\left(\bar{\Gamma}_{N} - \Gamma\right)(t_{j-1},x_{j-1};t_{j},x_{j})\,
 u(t_j,x_j) \, \dd x_j \right|\leq
 C_1\delta_{m}^{\frac{N+k+1}{2}}.
\end{align}
Thus, we have
\begin{align}
 |I_j(t_0,x_0)|&\leq C_1 \delta_{m}^{\frac{N+k+1}{2}} \int\limits_{\mathbb{R}^{(j-1)d}}
 \prod_{i=1}^{j-1} |\bar{\Gamma}_{N}(t_{i-1},x_{i-1};t_{i},x_{i})|\, \dd x_{j-1} \cdots \dd
 x_{1}\\
 &\leq C_1 \delta_{m}^{\frac{N+k+1}{2}} \int\limits_{\mathbb{R}^{(j-1)d}}
 \prod_{i=1}^{j-1} \left(\left|\bar{\Gamma}_{N}-\Gamma\right|+\Gamma\right)(t_{i-1},x_{i-1};t_{i},x_{i})\, \dd x_{j-1} \cdots \dd
 x_{1} \\
 &\leq C_1 \delta_{m}^{\frac{N+k+1}{2}} \int\limits_{\mathbb{R}^{(j-1)d}}
 \prod_{i=1}^{j-1} \left(C_2\delta_{m}^{\frac{N+1}{2}}\Gamma^{M+1}+\Gamma\right)(t_{i-1},x_{i-1};t_{i},x_{i})\, \dd x_{j-1} \cdots \dd
 x_{1},
\end{align}
where, in the last step we used Eq. \eqref{th:error_estim_fund_solution} in Theorem \ref{th:error_estimates_taylor}, 
with $\Gamma^{M+1}$ being the fundamental solution of the heat-type operator
\eqref{eq:heatoperator} with $\eps=1$. Therefore, by applying repeatedly the properties
$$
\int_{\mathbb{R}^d}\Gamma^{M+1}(t,x;s,y) \dd y =1,\qquad  \int_{\mathbb{R}^d}\Gamma(t,x;s,y) \dd y \leq 1,
$$
we obtain
\begin{align}
| I_j(t_0,x_0)|&\leq  C_1 \delta_{m}^{\frac{N+k+1}{2}}\left(  C_2 \delta_{m}^{\frac{N+1}{2}} + 1
\right)^{j-1}.
\end{align}
Eventually, since $N\ge 1$, we find by \eqref{eq:sum_Ii} that
\begin{align}
 \left| u(t,x)-\bar{u}_{N,m}(t,x) \right| \leq C_1 \left( C_2
 \delta_{m}^{\frac{N+1}{2}} + 1  \right)^{m}m\, \delta_{m}^{\frac{N+k+1}{2}} \leq C_3 e^{C_2(T-t)^{\frac{N+1}{2}}}\delta_{m}^{\frac{N+k-1}{2}},
\end{align}
which proves \eqref{eq:convergence}.
\end{proof}

\subsection*{Acknowledgments}
The authors would like the thank two anonymous referees, whose suggestions improved both the readability and mathematical quality of this manuscript.

%
%

\bibliographystyle{chicago}
\bibliography{Bibtex-Master-3.00}

\end{document}